\newtheorem{theorem}{Theorem}
\newtheorem{corollary}[theorem]{Corollary}
\newtheorem{definition}[theorem]{Definition}
\newtheorem{lemma}[theorem]{Lemma}
\newtheorem{proposition}[theorem]{Proposition}
\newenvironment{proof}[1][Proof]{\noindent{\sc#1} }{\nopagebreak\hfill \rule{0.5em}{0.5em}\\ }
\begin{document}

\title{Tail generating functions for 
Markov branching processes}
\author{Serik Sagitov\thanks{%
Chalmers University and University of Gothenburg, 412 96 Gothenburg, Sweden. Email address:
serik@chalmers.se. Supported by the Swedish Research Council grant
621-2010-5623.}
}
\maketitle

\begin{abstract}
We give a concise self-contained presentation of known and new limit theorems for the one-type Markov branching processes with continuous time. The new streamlined proofs are based on what we call, the tail generating function approach.  Our analysis focuses on the singularity points of the master integral equation for the probability generating functions of the current population size.
\end{abstract}

\section{Introduction}
The traditional way of presenting the theory of branching processes is to start with the discrete time Galton-Watson processes and then proceed with the continuous time Markov branching processes. The majority of the monographs on the theory of branching processes follow this order \cite{AH, AN, HJV, Ha, J, KA, M} with the exception of \cite{Se}, where the presentation of the  Galton-Watson processes and Markov branching processes is given in parallel. A major reason for this choice is the fact that the class of Galton-Watson processes  is, in a sense, reacher compared to the Markov branching processes.  Only a subclass of embeddable Galton-Watson processes can be obtained from the Markov branching process using  time discretization. For example, the branching process with Poisson distribution for the current population size is only possible in the discrete time setting. 
However, the continuous time setting is easier to analyze, cf \cite{Z}, and it seems to be more logical to start the theory with the direct proofs for  the continuous time branching processes. 

In this paper we give a concise self-contained presentation of key limit theorems for the one-type Markov branching process  $\{Z_t\}_{t\ge0}$ stemming form a single particle alive at time zero. We develop a new approach using a tool which we call tail generating functions. Our proofs are shorter and more transparent than those available in the literature so far. One of the purposes of this paper is to provide a convenient reference for researches using this basic stochastic reproduction model. 

Markov branching processes form a special class of age-dependent branching processes characterized by exponential life lengths. Each particle at the moment of death produces a random number of offspring with probability generating function
$$f(s)=\sum_{k=0}^\infty p_ks^k,$$
where it is always assumed that $p_1<1$.
Denote by $m=f'(1)$ the offspring mean number, and by $\lambda$ the parameter of the exponential distribution for the lifelength.
 In  terms of the population size mean
\begin{equation}\label{Mt}
M_t=E(Z_t)=e^{\lambda(m-1)t},
\end{equation}
three different regimes of reproduction can be discerned: subcritical ($m<1$), critical ($m=1$), and supercritical ($m>1$).

A remarkable feature of Markov branching processes is that the probability generating functions 
\[F_t(s)=E(s^{Z_t}|Z_0=1),\quad t\ge0,\]
satisfy the following integral equation
 \begin{equation}\label{ieq}
\lambda t=\int_s^{F_t(s)}{dx\over f(x)-x}.
\end{equation}
The main challenge in analyzing this equation is to handle the singularity points $x$ satisfying $f(x)=x$. Clearly, one of these singularity points is always $x=1$. Due to convexity of the generating function $f(s)$ for $s\ge0$, we have at most two such non-negative roots. 
\begin{definition}\label{qr}
 Denote by $q\in[0,1]$  the smallest  non-negative root of the equation $f(x)=x$. The second root, if any, will be denoted by $r$, so that $q<r<\infty$.
If $q=1<r$, then the process is called an extendable subcritical  branching process.
\end{definition}

It turns out that $q=P(Z_\infty=0)$ gives the probability of ultimate extinction of the branching process in question. In the subcritical and critical cases we have $q=1$, and the supercritical case is characterized by $0\le q<1=r$.  To make the text self-contained the above mentioned and other basic results  will be quickly established in Section \ref{Smain}. Section \ref{Smain} also presents the main result of the paper introducing refined integral equations for $F_t(s)$ which are obtained from \eqref{ieq} after the principal singularity terms being extracted. 

Sections \ref{Sta}, \ref{Stail}, and \ref{Stai} introduce and develop an instrumental device $\nabla_av(s)={v(s)-v(a)\over s-a}$, called a tail generating function, for working with the generating functions $v(s)$. 
 If $a=1$, and $f(s)=Es^\nu$ is a probability generating function, then the transformation
 \[\nabla_1  f(s)=\sum_{k=0}^\infty s^kP(\nu>k),\]
brings the generating function for the tail probabilities, which is the reason for using the term "tail generating function".

If $\nabla_av(s)$ has the form $c_1+c_2v(s)$, then $v(s)$ must be a linear-fractional function. In particular, for the simplest linear-fractional generating function $v(s)={1\over 1-s}$, we have 
\begin{align*}
 \nabla_{a_1}\ldots\nabla_{a_{n}} v(s)={1\over (1-a_1)\ldots(1-a_{n})(1-s)},
\end{align*}
given $a_1,\ldots,a_n\in[0,1)$.
The illuminating case of the linear-fractional $f(s)$ is discussed in Section \ref{Ex}.

In Section \ref{Sde} we give another angle to the transformations of branching processes connecting a supercritical branching process with $q\in(0,1)$ to a subcritical branching with $q=1$, on one hand, and to a "purely supercritical" branching process with $q=0$, on the other hand.
In Sections \ref{Ssub}, \ref{Scr}, and \ref{Ssup} we apply our approach to the critical, subcritical, and supercritical cases, and give new, streamlined proofs for (updated versions of) the key limit theorems. 

%

\section{Tail generating functions and $x\log x$ condition}\label{Sta}
\begin{definition} \label{def}
 Consider an arbitrary (not necessarily probability) generating function 
 
\begin{equation} \label{vs}
 v(s)=\sum_{k=0}^\infty s^kv_k,\quad v_k\ge0,\quad s\in[0,R],
\end{equation}
with radius of convergence $R\le\infty$. For  a given $a\in[0,R]$,  define a new generating function
$$\nabla_a v(s)={v(s)-v(a)\over s-a},\quad \nabla_a v(a)=v'(a),$$
which we will call  a tail generating function for $v(s)$. For $n\ge1$, define recursively
\[\nabla_a^n v(s)=\nabla_a (\nabla_a^{n-1}  v)(s),\quad\nabla_a^0  v(s)=v(s).\]
\end{definition}

\begin{proposition}\label{a0}
The commutative property  $\nabla_a\nabla_b=\nabla_b\nabla_a$ holds, and for any aligible $(a_1,\ldots,a_{n+1})$, 
\begin{align*}
&\nabla_{a_1}\ldots\nabla_{a_{n}} v(a_{n+1})
=\sum_{k=0}^{\infty} v_{k+n}\sum\limits_{\substack{i_1+\ldots+i_{n+1}=k\\ i_1\ge0,\ldots, i_{n+1}\ge0}}a_1^{i_1}\cdots a_{n+1}^{i_{n+1}}.
\end{align*}
\end{proposition}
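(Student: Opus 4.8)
The plan is to reduce everything to one explicit computation of the coefficients of $\nabla_a v$ and then iterate by induction on $n$. First I would compute $\nabla_a v(s)$ at the level of Taylor coefficients. Writing $v(s)-v(a)=\sum_k v_k(s^k-a^k)$ and using the elementary identity $\frac{s^k-a^k}{s-a}=\sum_{j=0}^{k-1}s^j a^{k-1-j}$, I would collect the coefficient of $s^m$ to obtain
\[
\nabla_a v(s)=\sum_{m=0}^\infty s^m\sum_{l=0}^\infty v_{m+1+l}\,a^l .
\]
In particular $\nabla_a v$ is again a power series with non-negative coefficients, so it belongs to the same class \eqref{vs} and the operator can be applied repeatedly. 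The consistency of the boundary convention $\nabla_a v(a)=v'(a)$ is a direct check: setting $s=a$ in this series and reindexing by $k=m+1+l$ reproduces $\sum_k k\,v_k a^{k-1}$.

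Second, I would establish by induction on $n$ the coefficient formula
\[
\bigl(\nabla_{a_1}\cdots\nabla_{a_n}v\bigr)_m=\sum_{l_1,\ldots,l_n\ge0} v_{m+n+l_1+\cdots+l_n}\,a_1^{l_1}\cdots a_n^{l_n},
\]
the base case $n=1$ being the display above. For the inductive step I apply the single-variable formula to $w=\nabla_{a_2}\cdots\nabla_{a_n}v$ (which has non-negative coefficients by the first step) and substitute the inductive expression for $w_{m+1+l_1}$; the cumulative index shift adds up to exactly $m+n$. Since this expression is manifestly symmetric in $a_1,\ldots,a_n$, the commutativity $\nabla_a\nabla_b=\nabla_b\nabla_a$ drops out for free.

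Finally, to recover the stated identity I evaluate at $s=a_{n+1}$, that is, multiply the $m$-th coefficient by $a_{n+1}^m$ and sum over $m$. Renaming $m=i_{n+1}$ and $l_j=i_j$ turns the result into $\sum_{i_1,\ldots,i_{n+1}\ge0} v_{n+i_1+\cdots+i_{n+1}}\,a_1^{i_1}\cdots a_{n+1}^{i_{n+1}}$, and grouping the terms according to $k=i_1+\cdots+i_{n+1}$ yields precisely the claimed formula.

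The only point requiring care is the rearrangement of the multiple infinite series, but since $v_k\ge0$ and the arguments $a_j$ are non-negative, every sum involved has non-negative terms, so Tonelli's theorem justifies all reorderings and interchanges of summation at once; there is no genuine analytic obstacle. I expect the inductive step — correctly tracking the running shift $+n$ together with the $n$ geometric-type summation variables — to be the main thing to get right, while the rest is bookkeeping.
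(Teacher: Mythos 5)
Your proof is correct and follows essentially the same route as the paper: both compute the coefficient expansion of a single $\nabla_a$ (via $\frac{s^k-a^k}{s-a}=\sum_{j=0}^{k-1}s^ja^{k-1-j}$), iterate to get the multi-index formula, and read off commutativity from the symmetry of the result. Your explicit induction on $n$ with the running shift $+n$, plus the Tonelli justification, merely formalizes what the paper compresses into its $n=2$ computation and the remark that arbitrary $n$ is ``handled recursively using the same argument.''
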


\begin{proof} Clearly,
\begin{align*}
\nabla_a  (s^k)&={s^k-a^k\over s-a}=\sum_{i=0}^{k-1}  s^{i}a^{k-1-i},
\end{align*}
and  the stated equality follows for $n=1$:
 \begin{align*}
\nabla_a  v(s)=\sum_{k=1}^\infty v_k\nabla_a(s^k)=\sum_{k=1}^\infty v_k\sum_{i=0}^{k-1}  s^{i}a^{k-1-i}.
\end{align*}
From here, writing 
$$\nabla_bv(s)=\sum_{k=0}^{\infty} s^k u_k,\quad u_k=\sum_{j=0}^{\infty} b^jv_{j+k+1},
$$ 
we find
\begin{equation*}
\nabla_{a}\nabla_{b} v(s)=\sum_{k=0}^\infty u_{k+1}\sum_{i=0}^{k}  s^{i}a^{k-i}
=\sum_{l=0}^\infty v_{l+2}\sum_{i_1+i_2+i_3=l}  s^{i_1}a^{i_2}b^{i_3},
\end{equation*}
giving the statement for $n=2$, which by the symmetry over $a$ and $b$ implies the stated commutativity. The arbitrary $n$ in Proposition \ref{a0} is handled recursively using the same argument.
\end{proof}

\begin{corollary}\label{comb}
We have
\begin{align*}
&\nabla_{a}^nv(s)
=\sum_{k=0}^{\infty} v_{k+n}\sum_{i=0}^ks^i{k-i+n-1\choose n-1}a^{k-i}.
\end{align*}
In particular, if $v^{(n)}(s)$ stands for the $n$-th derivative of $v(s)$, then
\[\nabla_a^n  v(a)={v^{(n)}(a)\over n!},\]
confirming that $v_n=\nabla_0^n  v(0)={v^{(n)}(0)\over n!}$.
\end{corollary}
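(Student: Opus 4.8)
The plan is to obtain the closed form for $\nabla_a^n v(s)$ as a direct specialization of Proposition \ref{a0}, and then to read off the two particular identities from it. First I would set $a_1=\cdots=a_n=a$ and take the free evaluation point $a_{n+1}=s$ in the formula of Proposition \ref{a0}, which is legitimate since by commutativity $\nabla_a^n v(s)$ is exactly $\nabla_{a_1}\cdots\nabla_{a_n}v(a_{n+1})$ under this substitution. Because $i_1+\cdots+i_{n+1}=k$, the monomial $a^{i_1}\cdots a^{i_n}s^{i_{n+1}}$ collapses to $s^i a^{k-i}$ with $i:=i_{n+1}$, so each term depends only on $i$ and on the number of ways the remaining exponents $i_1,\ldots,i_n$ can sum to $k-i$. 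That count is the number of non-negative integer solutions of $i_1+\cdots+i_n=k-i$, which by stars and bars equals $\binom{k-i+n-1}{n-1}$. Collecting terms yields exactly the claimed expression.

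For the evaluation at $s=a$, I would substitute $s=a$ into the formula just obtained, so that $s^i a^{k-i}=a^k$ and the inner sum becomes $a^k\sum_{i=0}^k\binom{k-i+n-1}{n-1}$. After reindexing $j=k-i$ this is $a^k\sum_{j=0}^k\binom{j+n-1}{n-1}$, and the hockey-stick identity telescopes it to $a^k\binom{k+n}{n}$, giving $\nabla_a^n v(a)=\sum_{k\ge0}v_{k+n}\binom{k+n}{n}a^k$. Separately, differentiating $v(s)=\sum_k v_k s^k$ term by term $n$ times and dividing by $n!$ gives $\frac{v^{(n)}(a)}{n!}=\sum_{j\ge0}v_{j+n}\binom{j+n}{n}a^j$; the two series coincide term by term, establishing $\nabla_a^n v(a)=v^{(n)}(a)/n!$. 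Setting $a=0$ then isolates the single surviving term $k=0$ and recovers $\nabla_0^n v(0)=v_n=v^{(n)}(0)/n!$.

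The one genuinely load-bearing combinatorial fact is the hockey-stick identity $\sum_{j=0}^k\binom{j+n-1}{n-1}=\binom{k+n}{n}$, which follows from a one-line induction on $k$ using Pascal's rule, or by counting in two ways the monomials of degree at most $k$ in $n$ variables. Everything else is bookkeeping: matching the stars-and-bars count to $\binom{k-i+n-1}{n-1}$ and confirming that term-by-term differentiation of the power series is valid on $[0,R)$, which holds because $v(s)$ converges there with non-negative coefficients. I would expect the main obstacle, if any, to be purely notational, namely keeping the index shift by $n$ in the coefficient $v_{k+n}$ consistent between the two derivations, rather than conceptual.
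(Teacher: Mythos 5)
Your proof is correct and takes essentially the same route as the paper: both specialize Proposition \ref{a0} to $a_1=\cdots=a_n=a$, $a_{n+1}=s$ and identify the inner sum by counting compositions with stars and bars. The only cosmetic difference is that the paper cites the single identity $\sum_{i_1+\cdots+i_{n+1}=k}a^{i_1}\cdots a^{i_{n+1}}=\binom{k+n}{n}a^{k}$ to handle the evaluation at $s=a$, whereas you obtain the same coefficient $\binom{k+n}{n}$ by a hockey-stick summation of $\binom{k-i+n-1}{n-1}$; these are equivalent bookkeeping.
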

\begin{proof} The claim follows from  Proposition \ref{a0} and a combinatoric  equality
\begin{align*}
\sum\limits_{\substack{i_1+\ldots+i_{n+1}=k\\ i_1\ge0,\ldots, i_{n+1}\ge0}}a^{i_1}\cdots a^{i_{n+1}}
={k+n\choose n}a^{k}.
\end{align*}
\end{proof}

\begin{proposition}\label{tail}
For given $a\in(0,R]$ and $n\ge1$, the moment condition
\begin{equation}\label{vlv}
 \sum_{k=2}^\infty v_ka^kk^{n-1}\ln k<\infty
\end{equation}
is equivalent to
$$\int_0^a \nabla_a^{n}  v(x)dx<\infty.$$
\end{proposition}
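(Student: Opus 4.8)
The plan is to reduce both sides to explicit coefficient sums and then compare them term by term. First I would invoke Corollary \ref{comb} to write $\nabla_a^n v(x)$ as a power series in $x$ with non-negative coefficients,
\[
\nabla_a^n v(x)=\sum_{k=0}^{\infty} v_{k+n}\sum_{i=0}^k\binom{k-i+n-1}{n-1}a^{k-i}\,x^i.
\]
Because every summand is non-negative, Tonelli's theorem lets me integrate term by term over $[0,a]$ with no convergence worries, and $\int_0^a x^i\,dx=a^{i+1}/(i+1)$ yields
\[
\int_0^a \nabla_a^n v(x)\,dx=\sum_{k=0}^{\infty} v_{k+n}\,a^{k+1}c_k,\qquad c_k:=\sum_{i=0}^k\frac{1}{i+1}\binom{k-i+n-1}{n-1}.
\]
Thus the integral is finite precisely when this last series converges, and the whole problem becomes understanding the growth of the combinatorial weights $c_k$.

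The heart of the argument is the two-sided estimate $c_k\asymp k^{n-1}\ln k$ for $k\ge2$, i.e.\ $c_k$ is bounded above and below by positive constant multiples of $k^{n-1}\ln k$. For the upper bound I would use monotonicity of the binomial coefficient, $\binom{k-i+n-1}{n-1}\le\binom{k+n-1}{n-1}=O(k^{n-1})$, pull it out of the sum, and be left with the harmonic sum $\sum_{i=0}^k(i+1)^{-1}=O(\ln k)$. For the lower bound I would restrict the sum to $0\le i\le\lfloor k/2\rfloor$; there $k-i\ge k/2$ forces $\binom{k-i+n-1}{n-1}\ge c\,k^{n-1}$, while $\sum_{i=0}^{\lfloor k/2\rfloor}(i+1)^{-1}\ge c'\ln k$. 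These two bounds together give the claimed order.

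Finally I would translate convergence of $\sum_k v_{k+n}a^{k+1}c_k$ into the moment condition \eqref{vlv}. Substituting $c_k\asymp k^{n-1}\ln k$, the generic term is comparable to $v_{k+n}a^{k+1}k^{n-1}\ln k$; re-indexing by $m=k+n$ and using $k^{n-1}\ln k\sim m^{n-1}\ln m$ as $m\to\infty$, this becomes comparable to $a^{1-n}v_m a^m m^{n-1}\ln m$. The fixed factor $a^{1-n}$ is a finite positive constant precisely because $a\in(0,R]$ is assumed strictly positive, and the finitely many initial terms are harmless for convergence, so the series converges if and only if $\sum_{m\ge2}v_m a^m m^{n-1}\ln m<\infty$, which is \eqref{vlv}. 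The one genuine obstacle is the two-sided estimate of $c_k$: the upper bound is routine, but the lower bound needs the restriction $i\le k/2$ so that the binomial factor stays of order $k^{n-1}$ while enough harmonic terms survive to produce the logarithm. Everything else is bookkeeping with non-negative series.
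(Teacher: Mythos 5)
Your proposal is correct and follows essentially the same route as the paper: expand $\nabla_a^n v$ via Corollary \ref{comb}, integrate term by term, and compare the resulting combinatorial weights with $k^{n-1}\ln k$. The only difference is one of detail — the paper merely asserts the asymptotic $\sum_{i=0}^k\frac{n}{i+1}\prod_{j=1}^{n-1}(k-i+j)\sim k^{n-1}\ln k$, whereas you prove the (sufficient) two-sided bound $c_k\asymp k^{n-1}\ln k$ explicitly, which is a welcome filling-in of the omitted step.
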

\begin{proof}
By Corollary \ref{comb}, for $n\ge1$,
\begin{align*}
\int_0^a \nabla_a^nv(x)dx&
=\sum_{k=0}^{\infty} v_{k+n}\sum_{i=0}^k{a^{i+1}\over i+1}{k-i+n-1\choose n-1}a^{k-i}
={1\over n!}\sum_{k=0}^{\infty} v_{k+n}a^{k+1}\sum_{i=0}^k{n\over i+1}\prod_{j=1}^{n-1}(k-i+j),
\end{align*}
and it enough to observe that
\[ \sum_{i=0}^k{n\over i+1}\prod_{j=1}^{n-1}(k-i+j)\sim k^{n-1}\ln k,\quad k\to\infty.\]
\end{proof}

\begin{corollary}\label{cor}
For a given generating function \eqref{vs} and an $a\in(0,R]$, the $x\log x$ moment condition 
\begin{equation}\label{xlx}
 \sum_{k=2}^\infty v_ka^kk\ln k<\infty,
\end{equation}
is equivalent to
$$\int_0^a \nabla_a^{2}  v(x)dx<\infty.$$

\end{corollary}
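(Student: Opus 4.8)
The plan is to recognize that Corollary \ref{cor} is simply the instance $n=2$ of Proposition \ref{tail}, so that no new analytic work is needed; the whole task is to check that the two special cases line up.

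First I would substitute $n=2$ into the moment condition \eqref{vlv}. The weight $k^{n-1}$ then becomes $k^{2-1}=k$, so \eqref{vlv} reads $\sum_{k=2}^\infty v_k a^k k\ln k<\infty$, which is verbatim the $x\log x$ condition \eqref{xlx}. Next I would specialize the integral side: $\int_0^a\nabla_a^n v(x)\,dx<\infty$ becomes $\int_0^a\nabla_a^2 v(x)\,dx<\infty$. Since Proposition \ref{tail} asserts the equivalence of these two conditions for every $n\ge1$, in particular for $n=2$, the corollary follows at once.

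Because this is a pure specialization, I do not expect any genuine obstacle. The only place where something could conceivably need re-checking is the asymptotic estimate invoked inside the proof of Proposition \ref{tail}, which for $n=2$ reduces to controlling $\sum_{i=0}^k\frac{2(k-i+1)}{i+1}$. If a self-contained verification were wanted, I would write $k-i+1=(k+2)-(i+1)$ to obtain $2(k+2)H_{k+1}-2(k+1)$ with $H_{k+1}=\sum_{m=1}^{k+1}\frac1m\sim\ln k$, whose leading order is $k\ln k$; as only the order of growth and not the multiplicative constant affects finiteness, this reconfirms the equivalence and hence the corollary.
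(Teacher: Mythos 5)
Your proposal is correct and matches the paper's intent exactly: the paper states Corollary \ref{cor} without a separate proof precisely because it is the $n=2$ specialization of Proposition \ref{tail}, with $k^{n-1}\ln k$ reducing to $k\ln k$. Your optional self-contained check of the inner sum via $k-i+1=(k+2)-(i+1)$ and the harmonic number $H_{k+1}$ is a sound verification of the asymptotic step the paper invokes (and you rightly note that the multiplicative constant is irrelevant for finiteness).
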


\section{Further properties of the tail generating functions}\label{Stail}

\begin{lemma}\label{abc}
For any $a_i\in[0,R]$,
\begin{align*}
v(s)=v(a_1)+\sum_{i=1}^n(s-a_1)\cdots(s-a_{i-1})\nabla_{a_1}\ldots\nabla_{a_{i-1}}v(a_i)+(s-a_1)\cdots(s-a_n)\nabla_{a_1}\ldots\nabla_{a_n} v(s)&.
\end{align*}
In particular, we have the following form of the Taylor polynomial
\begin{equation*}
 v(s)=\sum_{i=0}^{n-1}\nabla_a^i  v(a)(s-a)^i+\nabla_a^n  v(s)(s-a)^{n}.
\end{equation*}

\end{lemma}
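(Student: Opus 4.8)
The plan is to prove the identity by induction on $n$, driven by the single relation obtained by merely rearranging Definition \ref{def}, namely
\begin{equation*}
v(s)=v(a)+(s-a)\nabla_a v(s),
\end{equation*}
which holds for every $s$ (at $s=a$ both sides equal $v(a)$, since $\nabla_a v(a)=v'(a)$ is finite and gets multiplied by zero). This displayed relation is already the $n=1$ case of the asserted expansion.

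For the inductive step I would suppose the formula holds after $n-1$ applications, so that its remainder reads $(s-a_1)\cdots(s-a_{n-1})\,\nabla_{a_1}\cdots\nabla_{a_{n-1}}v(s)$. Applying the relation above to the generating function $w(s):=\nabla_{a_1}\cdots\nabla_{a_{n-1}}v(s)$ at the point $a_n$ splits it as $w(s)=w(a_n)+(s-a_n)\nabla_{a_n}w(s)$. Substituting this back, the constant piece $w(a_n)=\nabla_{a_1}\cdots\nabla_{a_{n-1}}v(a_n)$, multiplied by $(s-a_1)\cdots(s-a_{n-1})$, becomes exactly the new summand (the $i=n$ term), while the factor $(s-a_n)\nabla_{a_n}w(s)$ feeds into the new remainder $(s-a_1)\cdots(s-a_n)\,\nabla_{a_n}\nabla_{a_1}\cdots\nabla_{a_{n-1}}v(s)$. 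Here I invoke the commutativity $\nabla_{a_n}\nabla_{a_1}\cdots\nabla_{a_{n-1}}=\nabla_{a_1}\cdots\nabla_{a_n}$ from Proposition \ref{a0} to bring the remainder into the stated canonical order, which closes the induction.

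The Taylor-polynomial specialization then drops out by setting $a_1=\cdots=a_n=a$: each product $(s-a_1)\cdots(s-a_{i-1})$ collapses to $(s-a)^{i-1}$, each mixed operator $\nabla_{a_1}\cdots\nabla_{a_{i-1}}$ collapses to $\nabla_a^{i-1}$, and a shift of the summation index recovers $\sum_{i=0}^{n-1}\nabla_a^i v(a)(s-a)^i$ together with the remainder $\nabla_a^n v(s)(s-a)^n$. If one then wishes to recognize the classical coefficients, Corollary \ref{comb} supplies $\nabla_a^i v(a)=v^{(i)}(a)/i!$.

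Since the argument is a pure \emph{telescoping}, I anticipate no genuine analytic difficulty; the only points requiring care are bookkeeping. First, one must be consistent with the empty-product conventions at the boundary index (the $i=1$ summand, where both the string of linear factors and the string of $\nabla$ operators are empty), so that the leading constant $v(a_1)$ is accounted for exactly once. Second, the reordering of the operator string in the remainder has to be justified, and this is precisely where the commutativity of Proposition \ref{a0} is indispensable: without it the induction would naturally produce the reversed remainder $\nabla_{a_n}\cdots\nabla_{a_1}v(s)$ rather than the asserted $\nabla_{a_1}\cdots\nabla_{a_n}v(s)$.
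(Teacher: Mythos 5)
Your proof is correct and is essentially the paper's own argument: the paper's proof displays the recursively unfolded identity
$\nabla_{a_1}\ldots\nabla_{a_n} v(s)={v(s)-v(a_1)\over (s-a_1)\cdots(s-a_n)}-{\nabla_{a_1}v(a_2)\over (s-a_2)\cdots(s-a_n)}-\ldots-{\nabla_{a_1}\ldots\nabla_{a_{n-1}}v(a_n)\over s-a_n}$,
obtained by peeling off one $\nabla$ at a time from Definition \ref{def}, which is exactly your telescoping induction written in quotient rather than expanded form. Your explicit invocation of the commutativity from Proposition \ref{a0} to reorder the remainder, and your care with the empty-product convention so that $v(a_1)$ is counted exactly once, are the same (implicit) ingredients of the paper's recursion, just spelled out.
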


\begin{proof} 
The statement follows from
\begin{equation*}
\nabla_{a_1}\ldots\nabla_{a_n} v(s)={v(s)-v(a_1)\over (s-a_1)\cdots(s-a_n)}-{\nabla_{a_1}v(a_2)\over (s-a_2)\cdots(s-a_n)}-\ldots-{\nabla_{a_1}\ldots\nabla_{a_{n-1}}v(a_n)\over s-a_n},
\end{equation*}
which is obtained recursively from Definition \ref{def}.
\end{proof}
\begin{lemma}\label{ab}
For $a\ne b$, 
\begin{align*}
\nabla_{a}\nabla_{b} v(b)&=\nabla_{b}^2 v(a)={v'(b)-\nabla_bv(a)\over b-a},\qquad 
\nabla_{a}\nabla_{b} v(a)=\nabla_{a}^2 v(b)={v'(a)-\nabla_av(b)\over a-b},\\
\nabla_{a}^2\nabla_{b} v(a)&={v''(a)\over2(a-b)}-{v'(a)-\nabla_av(b)\over(a-b)^2},\qquad
 \nabla_{a}^2\nabla_{b} v(b)={v'(a)+v'(b)-2\nabla_bv(a)\over(a-b)^2}.
\end{align*}
 
\end{lemma}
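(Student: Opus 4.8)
The plan is to reduce each of the four identities to a short computation with the recursive definition $\nabla_a v(s)=(v(s)-v(a))/(s-a)$, leaning on two facts already available: the right-hand side of the formula in Proposition \ref{a0} is a symmetric sum, invariant under all permutations of $a_1,\ldots,a_{n+1}$ (the evaluation point $a_{n+1}$ included), and the diagonal values $\nabla_a^n v(a)=v^{(n)}(a)/n!$ of Corollary \ref{comb}. I would also record at the outset the first-order symmetry $\nabla_a v(b)=\nabla_b v(a)$, which is the $n=1$ case of Proposition \ref{a0} and which is needed because the stated right-hand sides mix the two notations.

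First I would dispose of the two second-order identities. The equalities $\nabla_a\nabla_b v(b)=\nabla_b^2 v(a)$ and $\nabla_a\nabla_b v(a)=\nabla_a^2 v(b)$ are not mere instances of the operator commutativity $\nabla_a\nabla_b=\nabla_b\nabla_a$, since the two sides differ in the evaluation point as well as in the operators; rather they follow from the stronger symmetry in Proposition \ref{a0}, because the triple $(a,b,b)$ may be permuted to $(b,b,a)$ and $(a,b,a)$ to $(a,a,b)$ without changing the symmetric sum. For the closed forms I would expand directly, writing $\nabla_b^2 v(a)=(\nabla_b v(a)-\nabla_b v(b))/(a-b)$ and substituting $\nabla_b v(b)=v'(b)$, which yields $(v'(b)-\nabla_b v(a))/(b-a)$, and symmetrically for the second line.

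For the two third-order identities I would set $w=\nabla_b v$ and reduce both to values of $\nabla_a^2 w$. For $\nabla_a^2\nabla_b v(a)=\nabla_a^2 w(a)$ I would invoke Corollary \ref{comb} to identify it with $w''(a)/2$, then differentiate $w(s)=(v(s)-v(b))/(s-b)$ twice, evaluate at $s=a$, and rewrite the resulting term $(v(a)-v(b))/(a-b)$ as $\nabla_a v(b)$. For $\nabla_a^2\nabla_b v(b)=\nabla_a^2 w(b)$ I would instead peel off one factor by the definition, $\nabla_a^2 w(b)=(\nabla_a w(b)-w'(a))/(b-a)$, compute $\nabla_a w(b)=(w(b)-w(a))/(b-a)=(v'(b)-\nabla_b v(a))/(b-a)$ together with the same $w'(a)=(v'(a)-\nabla_a v(b))/(a-b)$ as above, and add.

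The computations are elementary; the only genuine care lies in the bookkeeping of the signs of $(a-b)$ versus $(b-a)$ and of the powers of these differences, combined with the systematic use of $\nabla_a v(b)=\nabla_b v(a)$ and $\nabla_a^n v(a)=v^{(n)}(a)/n!$ to bring each answer into exactly the stated form. That sign-and-notation reconciliation is the main, if modest, obstacle.
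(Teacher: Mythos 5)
Your proof is correct and is essentially the paper's approach: the paper's own proof just says all four identities follow from Definition \ref{def} "in a straightforward way" (illustrating with the single computation $\nabla_b\nabla_{a}^2v(a)={\nabla_a^2v(a)-\nabla_a^2v(b)\over a-b}$), and your verification is exactly this kind of bookkeeping, using the diagonal values $\nabla_a^n v(a)=v^{(n)}(a)/n!$ from Corollary \ref{comb} and the first-order symmetry $\nabla_a v(b)=\nabla_b v(a)$. The only cosmetic difference is that the paper reorders operators by commutativity so the last-applied $\nabla_b$ can be peeled off first, whereas you keep $\nabla_b$ innermost (justifying the evaluation-point swaps by the full symmetry of the sum in Proposition \ref{a0}, and differentiating $w=\nabla_b v$ directly for the third-order terms); both routes are routine and valid.
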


\begin{proof} These relations are obtained from Definition \ref{def} in a straightforward way. For example,
\begin{align*}
\nabla_b\nabla_{a}^2v(a)={\nabla_a^2v(a)-\nabla_a^2v(b)\over a-b}={v''(a)\over2(a-b)}-{v'(a)-\nabla_av(b)\over(a-b)^2}.
\end{align*}


\end{proof}
Any generating function $v(s)$ is convex over $[0,R]$ and equation $v(x)=x$ has at most two non-negative roots. In the spirit of Definition \ref{qr}, assuming that there exists at least one such root,
 we will denote by $q\in[0,\infty)$ the smallest non-negative root of the equation $v(x)=x$. If the second root $r$ exists, then $r\in(q,\infty)$.  We right $q=r=\infty$ if  $v(x)>x$ for all $x\ge0$. We write $r=\infty$ when there is only a single root $q$ in the interval $[0,R]$.
\begin{corollary}\label{casa}
If $q<\infty$ and $v'(q)=1$, then
\begin{align*}
v(s)-s&=(q-s)^2\nabla_q^2v(s).
\end{align*}
\end{corollary}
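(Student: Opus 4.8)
The plan is to recognize this as an immediate specialization of the Taylor polynomial identity established in Lemma \ref{abc}. Setting $a=q$ and $n=2$ in the second display of that lemma gives
\[
v(s)=\nabla_q^0 v(q)+\nabla_q^1 v(q)(s-q)+\nabla_q^2 v(s)(s-q)^2,
\]
so the entire problem reduces to identifying the two lowest-order coefficients and inserting the hypotheses.

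First I would read off those coefficients from Corollary \ref{comb}. The relation $\nabla_a^n v(a)=v^{(n)}(a)/n!$ there yields $\nabla_q^0 v(q)=v(q)$ and $\nabla_q^1 v(q)=v'(q)$, so the Taylor form becomes $v(s)=v(q)+v'(q)(s-q)+(s-q)^2\nabla_q^2 v(s)$. Next I would invoke the two standing assumptions: since $q$ is by definition a root of $v(x)=x$ we have $v(q)=q$, and the hypothesis gives $v'(q)=1$. Substituting these collapses the constant and linear terms,
\[
v(s)=q+(s-q)+(s-q)^2\nabla_q^2 v(s)=s+(s-q)^2\nabla_q^2 v(s),
\]
and rearranging together with $(s-q)^2=(q-s)^2$ produces exactly $v(s)-s=(q-s)^2\nabla_q^2 v(s)$.

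There is no genuine analytic obstacle here; the only point demanding care is the correct bookkeeping of the low-order values $\nabla_q^0 v(q)$ and $\nabla_q^1 v(q)$ via Corollary \ref{comb}, and confirming that $q$ is a fixed point of $v$ so that the constant term matches the linear contribution $q+(s-q)=s$. Conceptually, the statement simply records that when $v$ is tangent to the diagonal $y=x$ at $q$ (matching both value and first derivative), the analytic function $v(s)-s$ has a double zero at $q$, and the second tail generating function $\nabla_q^2 v(s)$ is precisely the holomorphic quotient obtained after extracting the factor $(q-s)^2$.
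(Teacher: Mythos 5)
Your proposal is correct and is precisely the derivation the paper intends: Corollary \ref{casa} is stated without proof as an immediate consequence of the Taylor-form identity in Lemma \ref{abc} with $a=q$, $n=2$, combined with $v(q)=q$ (since $q$ is a root of $v(x)=x$) and the hypothesis $v'(q)=1$. Your bookkeeping of the low-order terms $\nabla_q^0 v(q)=v(q)$ and $\nabla_q^1 v(q)=v'(q)$ (available directly from Definition \ref{def} as well as from Corollary \ref{comb}) is exactly right, so there is nothing to add.
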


\begin{corollary}\label{cas}
If $q<\infty$, then
\begin{align*}
v(s)-s&=(q-s)(1-\nabla_qv(s)).
\end{align*}
If moreover $r<\infty$, then
\begin{align*}
s-v(s)&=(r-s)(\nabla_rv(s)-1).
\end{align*}
\end{corollary}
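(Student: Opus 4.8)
The plan is to reduce both identities to a one-line substitution into the definition $\nabla_a v(s) = (v(s)-v(a))/(s-a)$ from Definition \ref{def}, using the only additional fact I need: that $q$ and $r$ are roots of $v(x)=x$, so that $v(q)=q$ and $v(r)=r$. No convexity or root-counting beyond these two equalities should enter.

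For the first identity I would start from $\nabla_q v(s) = (v(s)-v(q))/(s-q) = (v(s)-q)/(s-q)$, substituting $v(q)=q$. Then $1-\nabla_q v(s) = \big((s-q)-(v(s)-q)\big)/(s-q) = (s-v(s))/(s-q)$, and multiplying by $(q-s)$ gives $(q-s)(s-v(s))/(s-q) = v(s)-s$, since $(q-s)/(s-q)=-1$. This is exactly the claimed formula. The second identity is the same computation with $r$ in place of $q$: from $v(r)=r$ one gets $\nabla_r v(s) = (v(s)-r)/(s-r)$, hence $\nabla_r v(s)-1 = (v(s)-s)/(s-r)$, and multiplying by $(r-s)$ yields $-(v(s)-s)=s-v(s)$.

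The only point needing a word of care — and it is the closest thing to an obstacle here — is the behaviour at $s=q$ (respectively $s=r$), where the denominator $s-q$ vanishes. But $\nabla_q v(s)$ is itself a genuine generating function, analytic on its disc of convergence, so the asserted equality is an identity between two analytic functions that holds on the set $s\ne q$ by the algebra above and therefore extends to $s=q$ by continuity; equivalently, Definition \ref{def} sets $\nabla_q v(q)=v'(q)$ and both sides reduce to $0$ there. Apart from resolving this removable singularity, the statement is an immediate algebraic consequence of the definition together with $v(q)=q$ and $v(r)=r$, so I would present it as two short displayed computations and nothing more.
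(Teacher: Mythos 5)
Your proof is correct and is essentially the argument the paper intends: the corollary is stated without proof precisely because it follows, as you show, by substituting the fixed-point identities $v(q)=q$ and $v(r)=r$ into Definition \ref{def} (equivalently, it is the $n=1$ case of Lemma \ref{abc} with $a_1=q$ or $a_1=r$). Your remark that the case $s=q$ (resp.\ $s=r$) is covered by the convention $\nabla_q v(q)=v'(q)$, both sides vanishing there, is a sensible finishing touch and nothing more is needed.
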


\begin{corollary}\label{aob}
If $q<r<\infty$, then $\nabla_qv(r)=\nabla_rv(q)=1$, $v'(q)<1<v'(r)$, and
\begin{align*}
\nabla_q v(s)&=1-(r-s)\nabla_q\nabla_r v(s),\qquad \nabla_r v(s)=1+(s-q)\nabla_q\nabla_r v(s),\\
\nabla_{q}\nabla_{r} v(q)&=\nabla_q^2v(r)={1-v'(q)\over r-q},\qquad \nabla_{q}\nabla_{r} v(r)=\nabla_r^2v(q)={v'(r)-1\over r-q}.
\end{align*}
Since $\nabla_{q}\nabla_{r} v(q)<\nabla_{q}\nabla_{r} v(r)$, we conclude
\begin{equation}\label{beta}
 \beta:={\nabla_{q}\nabla_{r} v(q)\over \nabla_{q}\nabla_{r} v(r)}\in(0,1).
\end{equation}
\end{corollary}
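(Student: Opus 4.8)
The plan is to settle the six displayed assertions one at a time, each reducing to the definition $\nabla_a v(s)=(v(s)-v(a))/(s-a)$ evaluated at the roots $q,r$, and then to assemble them into the claim about $\beta$. I would open with the two normalizations $\nabla_q v(r)=\nabla_r v(q)=1$, which are immediate from $v(q)=q$ and $v(r)=r$: indeed $\nabla_q v(r)=(v(r)-v(q))/(r-q)=(r-q)/(r-q)=1$, and symmetrically for $\nabla_r v(q)$. For the derivative inequalities $v'(q)<1<v'(r)$ I would appeal to convexity of $g(s):=v(s)-s$. Since $g$ is convex and vanishes at the two points $q<r$, it is $\le 0$ on $[q,r]$ and strictly negative somewhere inside, for otherwise $g$ would have an interior zero and a convex nonpositive function with an interior zero vanishes identically on $[q,r]$, forcing $v(s)\equiv s$ and contradicting the isolation of the roots. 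Hence neither $q$ nor $r$ is a global minimum of $g$, so $g'(q)=v'(q)-1<0$ and $g'(r)=v'(r)-1>0$. A route staying inside the paper's toolbox: $v'(q)=1$ would make Corollary \ref{casa} read $v(s)-s=(q-s)^2\nabla_q^2 v(s)\ge 0$, again contradicting $g<0$ on $(q,r)$.

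For the two functional identities I would work directly from the definition together with the commutativity of Proposition \ref{a0}. Writing $\nabla_q\nabla_r v(s)=(\nabla_q v(s)-\nabla_q v(r))/(s-r)$ and inserting $\nabla_q v(r)=1$ gives $\nabla_q v(s)=1+(s-r)\nabla_q\nabla_r v(s)=1-(r-s)\nabla_q\nabla_r v(s)$. Symmetrically, $\nabla_q\nabla_r v(s)=(\nabla_r v(s)-\nabla_r v(q))/(s-q)$ with $\nabla_r v(q)=1$ yields $\nabla_r v(s)=1+(s-q)\nabla_q\nabla_r v(s)$. Multiplying these by the appropriate linear factors and comparing with Corollary \ref{cas} also recovers the convenient identity $v(s)-s=(s-q)(s-r)\nabla_q\nabla_r v(s)$.

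The four evaluations then fall out of Lemma \ref{ab} specialized to $(a,b)=(q,r)$. Its formula $\nabla_q\nabla_r v(q)=\nabla_q^2 v(r)=(v'(q)-\nabla_q v(r))/(q-r)$ becomes $(1-v'(q))/(r-q)$ after substituting $\nabla_q v(r)=1$; the companion $\nabla_q\nabla_r v(r)=\nabla_r^2 v(q)=(v'(r)-\nabla_r v(q))/(r-q)$ becomes $(v'(r)-1)/(r-q)$ after substituting $\nabla_r v(q)=1$. Combined with the derivative inequalities of the first paragraph, both quantities are strictly positive, so $\beta=(1-v'(q))/(v'(r)-1)>0$.

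The remaining and most delicate point is the strict upper bound $\beta<1$, equivalently $\nabla_q\nabla_r v(q)<\nabla_q\nabla_r v(r)$. Here I would use that by Corollary \ref{comb} the function $\nabla_q\nabla_r v(s)=\sum_k c_k s^k$ has nonnegative coefficients, so that $\nabla_q\nabla_r v(r)-\nabla_q\nabla_r v(q)=\sum_{k\ge 1}c_k(r^k-q^k)\ge 0$, with equality exactly when $\nabla_q\nabla_r v$ is constant. This is the genuine obstacle: the constant case does occur, precisely when $v$ is the pure quadratic $v(s)=s+c(s-q)(s-r)$ (binary splitting), for which one computes $1-v'(q)=v'(r)-1$ and hence $\beta=1$. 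Thus the strict inequality, and with it $\beta\in(0,1)$, holds for every $v$ whose expansion $v(s)-s$ has degree exceeding two; the pure quadratic regime is the boundary case that the monotonicity argument must either exclude or treat separately.
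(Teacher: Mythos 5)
Your derivations of the provable parts are correct and follow essentially the route the paper intends: Corollary \ref{aob} is stated without proof, being meant to fall out of Definition \ref{def}, Lemma \ref{ab}, Corollary \ref{cas}, and the nonnegativity of the coefficients of tail generating functions, which is exactly the toolkit you use. The normalizations $\nabla_q v(r)=\nabla_r v(q)=1$, the two functional identities via commutativity, and the four evaluations via Lemma \ref{ab} are all handled as the paper would handle them; your convexity argument for $v'(q)<1<v'(r)$ is sound (and your alternative via Corollary \ref{casa} stays closer to the paper's machinery).

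Your final paragraph is the most valuable part, and you should state it with confidence rather than as an unresolved obstacle: the strict inequality $\nabla_{q}\nabla_{r} v(q)<\nabla_{q}\nabla_{r} v(r)$, and hence \eqref{beta}, is \emph{false} as stated. By Proposition \ref{a0} (this, not Corollary \ref{comb}, is the right citation for the mixed transform), $\nabla_q\nabla_r v(s)$ has nonnegative coefficients built from $v_{k+2}$, so it is constant in $s$ precisely when $v_k=0$ for all $k\ge3$, i.e.\ when $v$ is quadratic; nothing in the paper's hypotheses excludes this. It even occurs for a perfectly standard branching process: binary splitting $f(s)=p_0+p_2s^2$ with $p_0+p_2=1$, $p_0<p_2$, is supercritical with $q=p_0/p_2<1=r$, and there $1-f'(q)=p_2-p_0=f'(1)-1$, so $\beta=1$. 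The corollary (and the parenthetical claim $\beta\in(0,1)$ in Theorem \ref{main} $(iii)$) should therefore read $\beta\in(0,1]$, with $\beta=1$ exactly in the quadratic case. This is a flaw in the paper's statement, not in your argument: the proofs of Lemma \ref{bet}, equation \eqref{supeq}, and Theorem \ref{gsup} use only the identities of Corollary \ref{aob} and never the strictness $\beta<1$, so everything downstream survives with $\beta=1$.
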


\begin{lemma}\label{bet}
If $q<r<\infty$, then
\begin{align*}
{\nabla_q^2 v(s)\over 1-\nabla_qv(s)}&={\beta\over r-s}+\beta{\nabla_r^2 \nabla_qv(s)\over \nabla_r\nabla_qv(s)}
-{\nabla_r \nabla_q^2v(s)\over \nabla_r\nabla_qv(s)},\\
{\nabla_r^2 v(s)\over \nabla_rv(s)-1}&={1\over \beta(s-q)}+\beta{\nabla_r^2 \nabla_qv(s)\over \nabla_r\nabla_qv(s)}
-{\nabla_r \nabla_q^2v(s)\over \nabla_r\nabla_qv(s)}.
\end{align*}

\end{lemma}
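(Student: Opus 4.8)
The plan is to reduce both identities to a single elementary relation among the symmetric second tail generating function $g(s):=\nabla_q\nabla_rv(s)=\nabla_r\nabla_qv(s)$ and its two first-order tail generating functions, and then to deduce the second identity from the first by the symmetry exchanging the roots $q$ and $r$. First I would rewrite the two left-hand denominators using Corollary \ref{aob}: since $\nabla_qv(s)=1-(r-s)g(s)$ and $\nabla_rv(s)=1+(s-q)g(s)$, we have $1-\nabla_qv(s)=(r-s)g(s)$ and $\nabla_rv(s)-1=(s-q)g(s)$, so both denominators are explicit multiples of $g(s)$. By the commutativity of Proposition \ref{a0}, the two composite terms on the right-hand sides are themselves first-order tail generating functions of $g$, namely $\nabla_r^2\nabla_qv=\nabla_rg$ and $\nabla_r\nabla_q^2v=\nabla_qg$.

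Next I would express the numerators through $g$. Applying $\nabla_q$ to $\nabla_qv(s)=1+(s-r)g(s)$ and $\nabla_r$ to $\nabla_rv(s)=1+(s-q)g(s)$, using the elementary product rule $\nabla_a(uw)(s)=\nabla_au(s)\,w(s)+u(a)\nabla_aw(s)$ together with $\nabla_a(s-c)(s)=1$, gives $\nabla_q^2v(s)=g(s)-(r-q)\nabla_qg(s)$ and $\nabla_r^2v(s)=g(s)+(r-q)\nabla_rg(s)$.

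Substituting these into the first identity and clearing the denominator $(r-s)g(s)$, the statement collapses to the purely algebraic relation
\[(1-\beta)g(s)=(s-q)\nabla_qg(s)+\beta(r-s)\nabla_rg(s),\]
which I would verify at once from the interpolation identities $(s-q)\nabla_qg(s)=g(s)-g(q)$ and $(r-s)\nabla_rg(s)=g(r)-g(s)$ combined with $\beta\,g(r)=g(q)$; the last equation is just the definition \eqref{beta} of $\beta$ read off from $g(q)=\nabla_q\nabla_rv(q)$ and $g(r)=\nabla_q\nabla_rv(r)$. This settles the first identity.

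Finally I would obtain the second identity from the first by exchanging $q$ and $r$. Under this exchange $g$ is invariant, $\nabla_qg$ and $\nabla_rg$ swap, the factor $r-s$ becomes $q-s=-(s-q)$, and $\beta=g(q)/g(r)$ becomes $\beta^{-1}$; on the left $1-\nabla_qv$ turns into $1-\nabla_rv=-(\nabla_rv-1)$, contributing an overall sign that I would clear. The one step demanding care, and where I expect the only real risk of error, is precisely this symmetry bookkeeping: the exchange sends $\beta\mapsto\beta^{-1}$ and interchanges the two composite terms, so one must keep track of which of them carries the weight $\beta$ and which carries $\beta^{-1}$ in the resulting identity, as well as of the sign produced on the left-hand side.
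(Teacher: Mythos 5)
Your reduction of the first identity is correct and complete. Writing $g=\nabla_q\nabla_rv$, your relations $1-\nabla_qv(s)=(r-s)g(s)$, $\nabla_q^2v(s)=g(s)-(r-q)\nabla_qg(s)$ (via the product rule for $\nabla_a$), and the interpolation identities do collapse the claim to
\begin{equation*}
(1-\beta)g(s)=(s-q)\nabla_qg(s)+\beta(r-s)\nabla_rg(s),
\end{equation*}
which is exactly $\beta g(r)=g(q)$, i.e.\ definition \eqref{beta}. This is the same mechanism as the paper's proof, which also pivots on Corollary \ref{aob} but instead multiplies by $\nabla_q\nabla_rv(r)$ and extracts the pole by adding and subtracting; your organization is cleaner and easier to check.

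The second identity is where your flagged worry about bookkeeping is not just a risk but the crux, and it cannot be resolved in favor of the statement as printed. Carrying the $q\leftrightarrow r$ exchange through correctly (sign on the left, $\beta\mapsto\beta^{-1}$, the two composite terms swapping) gives
\begin{equation*}
{\nabla_r^2 v(s)\over \nabla_rv(s)-1}={1\over \beta(s-q)}+{\nabla_r^2 \nabla_qv(s)\over \nabla_r\nabla_qv(s)}-{1\over\beta}\,{\nabla_r \nabla_q^2v(s)\over \nabla_r\nabla_qv(s)},
\end{equation*}
i.e.\ the weights on the composite terms come out as $(1,\beta^{-1})$, not $(\beta,1)$ as in the lemma. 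The two forms agree only if $\beta\nabla_rg\equiv\nabla_qg$, which holds for linear-fractional $v$ but not in general. In fact the printed form is false: take $v(s)=(6+s^3)/7$, so that $q=1$, $r=2$, $g(s)=(s+3)/7$, $\beta=4/5$, and $\nabla_qg\equiv\nabla_rg\equiv1/7$; at $s=0$ the left-hand side equals $-4/3$, the printed right-hand side equals $-79/60$, while the display above gives $-4/3$. So your symmetry route, executed with the care you yourself prescribe, proves the \emph{corrected} second identity, not the printed one. Reassuringly, the paper's own ``similarly'' computation does the same: there one divides by $\nabla_q\nabla_rv(q)=g(q)$ rather than by $g(r)$, which is precisely what produces the weights $(1,\beta^{-1})$; the lemma's displayed second identity is therefore a misprint. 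Note that only the first identity feeds the first display of Lemma \ref{pisup} and hence \eqref{supeq}, so the main results are unaffected, but any use of the second identity should be made with the corrected weights.
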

\begin{proof} By Corollary \ref{aob},
 \begin{align*}
\nabla_{q}\nabla_{r} v(r){\nabla_q^2 v(s)\over 1-\nabla_qv(s)}
&={\nabla_{q}\nabla_{r} v(r)\nabla_q^2v(s)\over (r-s)\nabla_r\nabla_qv(s)}
={\nabla_q^2v(r)\over r-s}+{\nabla_{r}\nabla_{q} v(r) \nabla_q^2v(s)-\nabla_r\nabla_qv(s)\nabla_q^2v(r)\over (r-s)\nabla_r\nabla_qv(s)}\\
&={\nabla_{q}\nabla_{r} v(q)\over r-s}+{\nabla_{q}^2 v(r)\nabla_r^2 \nabla_qv(s)-\nabla_r\nabla_qv(r)\nabla_r\nabla_q^2v(r)\over \nabla_r\nabla_qv(s)}.
\end{align*}
Dividing both sides by $\nabla_{q}\nabla_{r} v(r)$ we get the first stated equality. The second equality is obtained similarly using
 \begin{align*}
\nabla_{q}\nabla_{r} v(q){\nabla_r^2 v(s)\over \nabla_rv(s)-1}
&={\nabla_{q}\nabla_{r} v(q)\nabla_r^2v(s)\over (s-q)\nabla_r\nabla_qv(s)}
=
{\nabla_{q}\nabla_{r} v(r)\over s-q}+{\nabla_r\nabla_qv(q)\nabla_q\nabla_r^2v(q)-\nabla_{r}^2 v(q)\nabla_r \nabla_q^2v(s)\over \nabla_r\nabla_qv(s)}.
\end{align*}

\end{proof}

\begin{corollary}\label{abb}
If $q<r<\infty$, then
\begin{align*}
\nabla_{q}^2\nabla_{r} v(q)&={1-v'(q)\over(r-q)^2}-{v''(q)\over2(r-q)},\\
 \nabla_{q}^2\nabla_{r} v(r)&={v'(q)+v'(r)-2\over(r-q)^2}.
\end{align*}

\end{corollary}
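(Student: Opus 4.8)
The plan is to obtain both identities directly from Lemma \ref{ab} by specializing the two free points to $a=q$ and $b=r$, and then eliminating the mixed tail values $\nabla_q v(r)$ and $\nabla_r v(q)$ using the fact that $q$ and $r$ are fixed points of $v$.

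First I would record the two relevant formulas from Lemma \ref{ab}, namely
\[
\nabla_a^2\nabla_b v(a)={v''(a)\over 2(a-b)}-{v'(a)-\nabla_a v(b)\over (a-b)^2},\qquad \nabla_a^2\nabla_b v(b)={v'(a)+v'(b)-2\nabla_b v(a)\over (a-b)^2},
\]
and then set $a=q$ and $b=r$. This expresses $\nabla_q^2\nabla_r v(q)$ and $\nabla_q^2\nabla_r v(r)$ in terms of $v'(q)$, $v'(r)$, $v''(q)$, and the cross terms $\nabla_q v(r)$, $\nabla_r v(q)$.

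The essential simplification is that, since $v(q)=q$ and $v(r)=r$ by Definition \ref{qr}, Definition \ref{def} gives $\nabla_q v(r)={v(r)-v(q)\over r-q}={r-q\over r-q}=1$, and symmetrically $\nabla_r v(q)=1$; this is precisely the identity already recorded in Corollary \ref{aob}. Substituting $\nabla_q v(r)=\nabla_r v(q)=1$ into the two specialized formulas, and then rewriting $(q-r)^2=(r-q)^2$ together with $q-r=-(r-q)$ to align the signs with the statement, produces the two claimed expressions.

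There is no genuine obstacle here beyond careful sign bookkeeping: the entire content is the reduction of Lemma \ref{ab} evaluated at the two fixed points, with the cross tail values collapsing to $1$. The one step worth verifying is the first identity, where converting $q-r$ to $-(r-q)$ flips the sign of the $v''(q)/\bigl(2(r-q)\bigr)$ term and turns $v'(q)-1$ into $1-v'(q)$, so that the raw Lemma \ref{ab} expression lands exactly on ${1-v'(q)\over(r-q)^2}-{v''(q)\over2(r-q)}$.
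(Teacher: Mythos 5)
Your proof is correct and is exactly the argument the paper intends: Corollary \ref{abb} is stated without proof precisely because it follows by specializing Lemma \ref{ab} to $a=q$, $b=r$ and substituting $\nabla_q v(r)=\nabla_r v(q)=1$ from Corollary \ref{aob}, with the sign adjustments $(q-r)^2=(r-q)^2$ and $q-r=-(r-q)$ that you carry out. Nothing further is needed.
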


\section{A family of $\pi$ functions}\label{Stai}
Given $q<\infty$, define $\pi(s_1,s_2)$ via
 \begin{align}
\pi(s_1,s_2)&=\int_{s_1}^{s_2}{dx\over v(x)-x},\quad 0\le s_1\le s_2<q,\label{pid}\\
\pi(s_1,s_2)&=\int_{s_1}^{s_2}{dx\over v(x)-x},\quad q< s_2\le s_1<r\wedge R.\label{pidu}
\end{align}
In view of \eqref{ieq}, studying the properties of such a function with $v(s)=f(s)$ is very important for the analysis of the Markov branching processes. 
\begin{lemma}\label{picr} If  $q<\infty$ and  $v'(q)=1$, then 
\[
\pi(s):=\pi(0,s)= \int_{0}^{s}{dx\over (q-x)^2\nabla_q^{2}  v(x)},\quad 0\le s<q.
 \]
\end{lemma}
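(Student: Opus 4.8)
The plan is to observe that this lemma is essentially a direct substitution, once the right factorization of the denominator is in hand. By the first definition \eqref{pid}, for $0\le s<q$ we have $\pi(s)=\pi(0,s)=\int_0^s \frac{dx}{v(x)-x}$, so everything reduces to rewriting the integrand $\frac{1}{v(x)-x}$ in the form claimed.

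The key input is Corollary \ref{casa}. The two hypotheses of the present lemma, namely $q<\infty$ and $v'(q)=1$, are exactly the hypotheses of that corollary, which supplies the factorization $v(x)-x=(q-x)^2\nabla_q^2 v(x)$ valid for every eligible $x$. Substituting this identity into the denominator of the integrand immediately yields
\[
\pi(s)=\int_0^s\frac{dx}{v(x)-x}=\int_0^s\frac{dx}{(q-x)^2\,\nabla_q^2 v(x)},\qquad 0\le s<q,
\]
which is the asserted formula. So the proof is a one-line application of Corollary \ref{casa}, with no residual computation.

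The only point that warrants a remark, rather than a genuine obstacle, is checking that the rewritten integrand is well defined and finite on the relevant interval, so that the substitution is legitimate and both sides are proper integrals. For $0\le x<q$ the factor $(q-x)^2$ is strictly positive, and $\nabla_q^2 v(x)$ is a nonnegative power series in $x$ by the explicit formula in Corollary \ref{comb}; moreover $\nabla_q^2 v(x)=\frac{v(x)-x}{(q-x)^2}$ is strictly positive there, since convexity of $v$ together with $v'(q)=1$ forces $v'(x)\le 1$ and hence $v(x)-x>0$ for $x<q$. Thus the integrand is continuous and finite on $[0,s]$ for each $s<q$, and the displayed identity holds as an equality of convergent integrals. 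No singularity issue arises precisely because the integration is cut off strictly below the double root $q$.
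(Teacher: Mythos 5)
Your proof is correct and is essentially the paper's own argument: the paper proves this lemma by the single observation that the claim "immediately follows from Corollary \ref{casa}," i.e.\ by substituting the factorization $v(x)-x=(q-x)^2\nabla_q^2 v(x)$ into the defining integral \eqref{pid}. Your additional check that the integrand is finite and positive on $[0,s]$ for $s<q$ is a harmless (and correct) elaboration of what the paper leaves implicit.
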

\begin{proof}
The claim immediately follows from Corollary \ref{casa}.
\end{proof}

\begin{lemma}\label{pisub} If  $q<\infty$ and  $v'(q)<1$, then 
  \[
(1-v'(q))\pi(s_1,s_2)=\ln{q-s_1\over q-s_2}  +\pi_q(s_1)-\pi_q(s_2),
\]
where
 \begin{align}\label{piq}
\pi_q(s)=\int_0^{s}{\nabla_q^{2}  v(x)dx\over 1-\nabla_q  v(x)},\quad 0\le s<r\wedge R.
\end{align}
\end{lemma}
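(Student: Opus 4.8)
The plan is to reduce the integrand $\frac{1}{v(x)-x}$ to an explicit logarithmic piece plus exactly the integrand defining $\pi_q$, so that the identity falls out after a single integration. The starting point is Corollary~\ref{cas}, which for $q<\infty$ supplies the factorization $v(x)-x=(q-x)(1-\nabla_q v(x))$. Substituting this into \eqref{pid}--\eqref{pidu} turns the problem into understanding
$$\frac{1}{(q-x)(1-\nabla_q v(x))},$$
and the task becomes a partial-fraction-type separation of the factor $\frac{1}{q-x}$, which carries the logarithmic singularity at $x=q$, from a regular remainder.

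The key algebraic observation is that $1-\nabla_q v(x)$ agrees at $x=q$ with the constant $1-v'(q)$, since $\nabla_q v(q)=v'(q)$ by Definition~\ref{def}. First I would write
$$\frac{1-v'(q)}{(q-x)(1-\nabla_q v(x))}=\frac{1}{q-x}+\frac{1}{q-x}\cdot\frac{(1-v'(q))-(1-\nabla_q v(x))}{1-\nabla_q v(x)}.$$
The numerator of the correction term is $\nabla_q v(x)-v'(q)$, and here I would invoke the second tail generating function: because $\nabla_q^2 v=\nabla_q(\nabla_q v)$, Definition~\ref{def} gives $\nabla_q v(x)-v'(q)=(x-q)\nabla_q^2 v(x)$. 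The factor $x-q$ cancels against $q-x$ (producing a sign), so the correction collapses to $-\frac{\nabla_q^2 v(x)}{1-\nabla_q v(x)}$, and the whole identity becomes
$$\frac{1-v'(q)}{v(x)-x}=\frac{1}{q-x}-\frac{\nabla_q^2 v(x)}{1-\nabla_q v(x)}.$$

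Finally I would integrate this identity from $s_1$ to $s_2$. The first term gives $\int_{s_1}^{s_2}\frac{dx}{q-x}=\ln\frac{q-s_1}{q-s_2}$, while the second integrates to $\pi_q(s_2)-\pi_q(s_1)$ directly from the definition \eqref{piq}; combining the two yields the stated formula. The step needing the most care is not the integration but justifying that all the integrands are well defined on the relevant range: one must check that $1-\nabla_q v(x)>0$ throughout $[0,r\wedge R)$. This follows from the factorization of Corollary~\ref{cas} together with the sign of $v(x)-x$ (positive below $q$, negative between $q$ and $r$), while at the single point $x=q$ positivity is exactly the hypothesis $v'(q)<1$. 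Once this is in hand the decomposition is valid pointwise, the term-by-term integration is legitimate, and $\pi_q$ in \eqref{piq} is finite precisely on the asserted interval $[0,r\wedge R)$.
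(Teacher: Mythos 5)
Your proof is correct and follows essentially the same route as the paper's: both invoke Corollary \ref{cas} to factor $v(x)-x=(q-x)(1-\nabla_q v(x))$, split the integrand pointwise as $\frac{1-v'(q)}{v(x)-x}=\frac{1}{q-x}-\frac{\nabla_q^2 v(x)}{1-\nabla_q v(x)}$, and integrate. Your extra steps — deriving the splitting via $\nabla_q v(x)-v'(q)=(x-q)\nabla_q^2 v(x)$ and checking $1-\nabla_q v(x)>0$ on $[0,r\wedge R)$ — merely make explicit what the paper leaves implicit.
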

\begin{proof}
By Corollary \ref{cas},
\[(1-v'(q))\int_{s_1}^{s_2}{dx\over v(x)-x}=\int_{s_1}^{s_2}{1-\nabla_q v(q)\over (q-x)(1-\nabla_q v(x))}dx=\int_{s_1}^{s_2}{dx\over q-x}-\int_{s_1}^{s_2}{\nabla_q^2 v(x)dx\over 1-\nabla_q v(x)},\]
implying the assertion. Notice that $\pi_q(s)$ is a generating function due to
\[\pi_q(s)=\sum_{k=0}^\infty\int_0^{s}\nabla_q^2 v(x) (\nabla_q v(x))^kdx.\]
\end{proof}

\begin{lemma}\label{pisu} If  $r<\infty$, then for $q< s_2\le s_1<r$,
  \[
(v'(r)-1)\pi(s_1,s_2)=\ln{r-s_1\over r-s_2}  +\pi_r(s_2,s_1),
\]
where
 \begin{align}\label{piqu}
\pi_r(s_1,s_2)=\int_{s_1}^{s_2}{\nabla_r^{2}  v(x)dx\over \nabla_r  v(x)-1},\quad q< s_1\le s_2<r.
\end{align}
\end{lemma}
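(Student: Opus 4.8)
The plan is to mirror the proof of Lemma \ref{pisub}, but now working near the upper root $r$ rather than the lower root $q$. The key structural input is the second identity in Corollary \ref{cas}, namely $s-v(s)=(r-s)(\nabla_rv(s)-1)$, which factors out the singularity of the integrand at $x=r$. Since in this lemma we integrate over $q<s_2\le s_1<r$, where $v(x)<x$, the natural quantity to work with is $s-v(s)=-(v(s)-s)$, and the factorization through $r$ is exactly what is available.

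First I would write, for the range $q< s_2\le s_1<r$,
\begin{align*}
(v'(r)-1)\int_{s_2}^{s_1}{dx\over x-v(x)}=\int_{s_2}^{s_1}{v'(r)-1\over (r-x)(\nabla_r v(x)-1)}dx,
\end{align*}
using Corollary \ref{cas} in the denominator. Then I would invoke Corollary \ref{aob}, which gives $\nabla_r^2v(q)={v'(r)-1\over r-q}$ and, more to the point, identifies $v'(r)-1$ as the numerator constant that pairs with the factor $\nabla_r v(x)-1$; the aim is to split the integrand into the clean logarithmic piece plus a remainder. The decomposition I expect is
\begin{align*}
{v'(r)-1\over (r-x)(\nabla_r v(x)-1)}={1\over r-x}-{\nabla_r^2 v(x)\over \nabla_r v(x)-1},
\end{align*}
which is the exact analogue of the splitting used in Lemma \ref{pisub}, now relying on the relation $\nabla_r v(x)-1=(x-q)\nabla_q\nabla_r v(x)$ from Corollary \ref{aob} together with $v'(r)-1=(r-q)\nabla_r^2v(q)$ to verify that the two terms on the right combine back correctly. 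Integrating the first term from $s_2$ to $s_1$ yields $\ln\frac{r-s_2}{r-s_1}=-\ln\frac{r-s_1}{r-s_2}$, and the sign works out because the orientation of the integral in the definition \eqref{pidu} runs from $s_1$ down to $s_2$.

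After assembling these pieces I would match signs carefully: $\pi(s_1,s_2)$ in \eqref{pidu} is $\int_{s_1}^{s_2}\frac{dx}{v(x)-x}$ with $s_2\le s_1$, so reversing limits and using $v(x)-x<0$ turns this into $\int_{s_2}^{s_1}\frac{dx}{x-v(x)}$, a positive quantity, and the remainder integral is precisely $\pi_r(s_2,s_1)$ as defined in \eqref{piqu}. The main obstacle, and the only place where genuine care is needed, is tracking these orientation and sign conventions through the two reversals (once from the definition of $\pi$ and once in the logarithm), so that the final identity reads $(v'(r)-1)\pi(s_1,s_2)=\ln\frac{r-s_1}{r-s_2}+\pi_r(s_2,s_1)$ with all signs consistent; the algebraic verification of the partial-fraction splitting itself is routine given Corollaries \ref{cas} and \ref{aob}.
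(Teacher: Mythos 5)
Your plan---factor $x-v(x)=(r-x)(\nabla_r v(x)-1)$ via Corollary \ref{cas} and split off the logarithmic singularity at $r$---is exactly the paper's route, but the splitting you propose is false, and it fails at the one step you yourself defer as ``routine.'' Put the right-hand side of your decomposition over a common denominator: since $\nabla_r^2 v(x)={\nabla_r v(x)-v'(r)\over x-r}$, i.e.\ $(r-x)\nabla_r^2 v(x)=v'(r)-\nabla_r v(x)$, one gets
\[
{1\over r-x}-{\nabla_r^2 v(x)\over \nabla_r v(x)-1}
={\bigl(\nabla_r v(x)-1\bigr)-\bigl(v'(r)-\nabla_r v(x)\bigr)\over (r-x)\bigl(\nabla_r v(x)-1\bigr)}
={2\nabla_r v(x)-1-v'(r)\over (r-x)\bigl(\nabla_r v(x)-1\bigr)},
\]
whose numerator is not $v'(r)-1$. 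The correct splitting carries a plus sign,
\[
{v'(r)-1\over (r-x)\bigl(\nabla_r v(x)-1\bigr)}={1\over r-x}+{\nabla_r^2 v(x)\over \nabla_r v(x)-1},
\]
because at the upper root there is no sign flip: $v'(r)-\nabla_r v(x)=(r-x)\nabla_r^2 v(x)$, whereas the minus in Lemma \ref{pisub} comes precisely from $\nabla_q v(x)-v'(q)=(x-q)\nabla_q^2 v(x)=-(q-x)\nabla_q^2 v(x)$. The two identities you quote from Corollary \ref{aob} are true but do not verify (and cannot rescue) the minus-sign version, so your claim that the splitting is the ``exact analogue'' of the subcritical one is where the argument breaks.

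The consequence is that your assembled identity is not what your own steps produce. Note first that there is only one orientation conversion, not two: by \eqref{pidu}, $\pi(s_1,s_2)=\int_{s_1}^{s_2}{dx\over v(x)-x}=\int_{s_2}^{s_1}{dx\over x-v(x)}$ already absorbs both the limit swap and the sign of the integrand, so no further reversal is available to turn $\ln{r-s_2\over r-s_1}$ into $\ln{r-s_1\over r-s_2}$. Carrying your splitting through therefore gives $(v'(r)-1)\pi(s_1,s_2)=\ln{r-s_2\over r-s_1}-\pi_r(s_2,s_1)$, while the corrected plus-sign splitting gives
\[
(v'(r)-1)\pi(s_1,s_2)=\ln{r-s_2\over r-s_1}+\pi_r(s_2,s_1),\qquad q<s_2\le s_1<r,
\]
and neither coincides with the formula you assert. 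In fact the second display is the true statement: since $v'(r)>1$, $\pi(s_1,s_2)>0$ and $\pi_r(s_2,s_1)\ge0$ for $s_2\le s_1$, the logarithmic term must be the non-negative one, namely $\ln{r-s_2\over r-s_1}$. A concrete check with $v(x)={2\over5}+{3\over5}x^2$ (so $q={2\over3}$, $r=1$, $\nabla_r v(x)-1={3\over5}\bigl(x-{2\over3}\bigr)$, $\nabla_r^2 v(x)={3\over5}$) yields $(v'(r)-1)\pi(s_1,s_2)=\ln{(1-s_2)(s_1-2/3)\over(1-s_1)(s_2-2/3)}$, which matches only the plus-sign identity. (Be aware that the paper's printed proof contains the same minus-sign slip and its printed statement the same inverted logarithm; you have in effect reproduced these typos rather than derived the result, whereas a careful execution of your own plan would have led you to the corrected identity above.)
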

\begin{proof}
By Corollary \ref{cas}, for $q<s_1<s_2<r$,
\[(v'(r)-1)\int_{s_1}^{s_2}{dx\over x-v(x)}=\int_{s_1}^{s_2}{\nabla_r v(r)-1\over (r-x)(\nabla_r v(x)-1)}dx=\int_{s_1}^{s_2}{dx\over r-x}-\int_{s_1}^{s_2}{\nabla_r^2 v(x)dx\over \nabla_r v(x)-1},\]
implying the assertion. 
\end{proof}

\begin{lemma}\label{pisup} If  $r<\infty$, then 
\begin{align*}
 \pi_q(s)&
=\beta\ln{1\over r-s}+\pi_{rq}(s)-\pi_{qr}(s),\quad 0\le s<r,\\
\pi_r(s_1,s_2)&=\beta^{-1}\ln{1\over s-q}+\pi_{rq}(s_2)-\pi_{rq}(s_1)-\pi_{qr}(s_1)+\pi_{qr}(s_2),\quad q< s_1<s_2<r,
\end{align*}
\[\]
where $\beta$ is given by \eqref{beta} and 
 \begin{align}\label{1q}
\pi_{rq}(s)&=\beta\int_0^s{\nabla_r^2\nabla_q  v(x)\over \nabla_r\nabla_q  v(x)}dx,\quad 
\pi_{qr}(s)=\int_0^s{\nabla_r\nabla_q^2  v(x)\over \nabla_r\nabla_q  v(x)}dx.
\end{align}
\end{lemma}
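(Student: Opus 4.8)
The plan is to read both displayed identities as nothing more than the integrals of the two pointwise identities already supplied by Lemma \ref{bet}. Indeed, the integrand defining $\pi_q(s)$ in \eqref{piq} is \emph{exactly} the left-hand side of the first identity in Lemma \ref{bet}, and the integrand defining $\pi_r(s_1,s_2)$ in \eqref{piqu} is exactly the left-hand side of the second. So once Lemma \ref{bet} is in hand, nothing remains but term-by-term integration, matching each surviving piece against the definitions \eqref{1q} of $\pi_{rq}$ and $\pi_{qr}$ and evaluating the one elementary singular term in each case.

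For the first identity I would substitute
$$\frac{\nabla_q^2 v(x)}{1-\nabla_q v(x)}=\frac{\beta}{r-x}+\beta\,\frac{\nabla_r^2\nabla_q v(x)}{\nabla_r\nabla_q v(x)}-\frac{\nabla_r\nabla_q^2 v(x)}{\nabla_r\nabla_q v(x)}$$
into \eqref{piq} and integrate over $[0,s]$. The middle and last terms reproduce $\pi_{rq}(s)$ and $-\pi_{qr}(s)$ verbatim from \eqref{1q}, while the first term contributes $\beta\int_0^s (r-x)^{-1}dx=\beta\ln\frac{r}{r-s}$, i.e.\ the advertised logarithmic term up to the additive constant $\beta\ln r$, which is immaterial since $\pi_q$ enters Lemma \ref{pisub} only through the difference $\pi_q(s_1)-\pi_q(s_2)$.

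The second identity is obtained in the same way, now integrating the second identity of Lemma \ref{bet} over $[s_1,s_2]$ with $q<s_1<s_2<r$. The two generating-function terms yield the differences $\pi_{rq}(s_2)-\pi_{rq}(s_1)$ and $-\bigl(\pi_{qr}(s_2)-\pi_{qr}(s_1)\bigr)$, and the singular term produces $\beta^{-1}\int_{s_1}^{s_2}(x-q)^{-1}dx=\beta^{-1}\ln\frac{s_2-q}{s_1-q}$. (The bare $\ln\frac{1}{s-q}$ in the statement should be read as this two-point logarithm, and the signs of the $\pi_{qr}$ contributions must be tracked by the fundamental theorem of calculus exactly as above.)

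The computation is genuinely routine once Lemma \ref{bet} is available, so the only points requiring care are bookkeeping ones: the additive constant and orientation in the logarithmic terms, and the convergence of the surviving integrals. The latter is the more substantive check, and it is where I expect the only real obstacle. By Corollary \ref{aob} together with the commutativity of Proposition \ref{a0}, the function $\nabla_r\nabla_q v$ is a generating function with non-negative coefficients whose endpoint values $\frac{1-v'(q)}{r-q}$ and $\frac{v'(r)-1}{r-q}$ are both strictly positive; hence it is positive and bounded away from zero on $[0,r]$, so the denominators in \eqref{1q} cause no trouble and $\pi_{rq},\pi_{qr}$ remain finite up to $s=r$. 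Consequently the entire blow-up of $\pi_q(s)$ as $s\to r^-$, and of $\pi_r(s_1,s_2)$ as $s_1\to q^+$, is carried solely by the logarithmic terms, which is precisely the behaviour forced by $\nabla_q v(r)=\nabla_r v(q)=1$ in Corollary \ref{aob} and serves as a useful consistency check on the signs.
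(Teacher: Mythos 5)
Your proposal is correct and coincides with the paper's own proof, which for this lemma consists of the single line ``Use Lemma \ref{bet}'': one integrates the two identities of that lemma and matches the surviving terms against the definitions \eqref{1q}. Your extra bookkeeping --- the immaterial additive constant $\beta\ln r$, the two-point reading of the logarithmic term, and the sign of the $\pi_{qr}$ contributions dictated by the fundamental theorem of calculus --- correctly flags the small typographical slips in the stated formulas that the paper's one-line proof glosses over.
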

\begin{proof}
Use Lemma \ref{bet}.\end{proof}

\begin{proposition}\label{pipi}
Consider the $\pi$ functions defined by \eqref{piq} and \eqref{1q}.

(i) If  $q<\infty$, then $\mathcal L_q(x)=e^{\pi_q(q-x)}$ slowly varies at zero, and $\pi_q(q)<\infty$ if and only if the  $x\log x$ condition \eqref{xlx} holds with $a=q$. If $r<\infty$, then $\pi_{q}(q)<\infty$.

(ii) If  $r<\infty$, then $\mathcal L_{rq}(x)=e^{\pi_{rq}(r-x)}$ slowly varies at zero, and $\pi_{rq}(r)<\infty$ if and only if the  $x\log x$ condition \eqref{xlx} holds with $a=r$. 

(iii)  If $r<\infty$, then $\pi_{qr}(r)<\infty$.
\end{proposition}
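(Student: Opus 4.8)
The plan is to treat all three parts through one scheme. Each $\pi$-function is an integral $\int_0^s N(x)/D(x)\,dx$ of a ratio of tail generating functions, so I will (a) control the denominator $D$ near the relevant singular endpoint ($q$ for (i), $r$ for (ii) and (iii)); (b) reduce finiteness at the endpoint to finiteness of the pure tail integral $\int N$, which Corollary \ref{cor} or Proposition \ref{tail} converts into an $x\log x$ (or weaker) moment condition; and (c) deduce slow variation from the elementary identities $(q-s)\nabla_q^2v(s)=v'(q)-\nabla_qv(s)$ and $(r-s)\nabla_ru(s)=u(r)-u(s)$, both of which tend to $0$ as $s$ approaches the endpoint. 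These follow at once from $\nabla_a^2v(s)=(\nabla_av(s)-v'(a))/(s-a)$ and the definition of $\nabla_a$.

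For part (i), note that $\pi_q$ presupposes $v'(q)<1$, and I may take $q>0$ (else $v_0=0$ and everything is trivial). By convexity $\nabla_qv(x)$ is nondecreasing on $[0,q)$ with limit $v'(q)$, so $1-\nabla_qv(x)$ is pinched between the positive constants $1-v'(q)$ and $1-\nabla_qv(0)=v_0/q$. Hence $\pi_q(q)<\infty$ iff $\int_0^q\nabla_q^2v(x)\,dx<\infty$, which by Corollary \ref{cor} is exactly \eqref{xlx} with $a=q$. For slow variation: if $\pi_q(q)<\infty$ then $\mathcal L_q$ extends continuously and positively to $0$ and is trivially slowly varying; if $\pi_q(q)=\infty$ I bound, for $0<t<1$, the increment $\pi_q(q-tx)-\pi_q(q-x)=\int_{q-x}^{q-tx}\tfrac{\nabla_q^2v}{1-\nabla_qv}$ by $(1-v'(q))^{-1}(1-t)x\,\nabla_q^2v(q-tx)$ using monotonicity of $\nabla_q^2v$, and the identity above forces $(q-s)\nabla_q^2v(s)\to0$, so the increment vanishes. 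Finally, if $r<\infty$ then $q<r\le R$, so $q$ lies strictly inside the disc of convergence, all power moments at $q$ are finite, \eqref{xlx} holds at $a=q$, and $\pi_q(q)<\infty$.

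For parts (ii) and (iii) I set $h:=\nabla_r\nabla_qv$, a generating function that is positive and nondecreasing on $[0,r)$ with $h(r)=\tfrac{v'(r)-1}{r-q}>0$ by Corollary \ref{aob}. Commutativity gives $\nabla_r^2\nabla_qv=\nabla_rh$ and $\nabla_r\nabla_q^2v=\nabla_qh$, so the integrands become $\beta\,\nabla_rh/h$ and $\nabla_qh/h$. For (iii) I write $\tfrac{\nabla_qh(x)}{h(x)}=\tfrac{1}{x-q}\bigl(1-\tfrac{h(q)}{h(x)}\bigr)$; the pole at $x=q$ is removable, and as $x\to r$ the factor $1-h(q)/h(x)$ stays in $[0,1)$ (even if $h(r)=\infty$) while $1/(x-q)\to 1/(r-q)$, so the integrand is bounded on $[0,r]$ and $\pi_{qr}(r)<\infty$ unconditionally. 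For (ii), when $v'(r)<\infty$ (equivalently $h(r)<\infty$) the denominator is again pinched between positive constants, so $\pi_{rq}(r)<\infty$ iff $\int_0^r\nabla_rh<\infty$, which Proposition \ref{tail} with $n=1$ turns into $\sum_kh_kr^k\ln k<\infty$; writing $h_k=\sum_{i,j\ge0}q^ir^jv_{i+j+k+2}$, a geometric rearrangement using $q<r$ shows this is comparable to $\sum_nv_nr^nn\ln n$, i.e. \eqref{xlx} at $a=r$. Slow variation of $\mathcal L_{rq}$ follows exactly as in (i), now from $(r-s)\nabla_rh(s)=h(r)-h(s)\to0$.

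The main obstacle I anticipate is the boundary case $v'(r)=\infty$ in (ii). There every coefficient of $\nabla_rh=\nabla_r^2\nabla_qv$ is infinite, since the divergent tail $\sum_n r^nh_n=w'(r)=v'(r)/(r-q)$ (with $w=\nabla_qv$) feeds into each coefficient, so $\pi_{rq}\equiv\infty$; simultaneously $\sum_nv_nr^nn\ln n\ge rv'(r)=\infty$, so \eqref{xlx} fails at $a=r$ and the stated equivalence still holds, the slow-variation assertion being understood under $v'(r)<\infty$. The only genuinely computational steps are the coefficient comparison $\sum_kh_kr^k\ln k\asymp\sum_nv_nr^nn\ln n$ and its analogue in (iii) (where the extra factor $n$ arises from $\sum_{k\le n}\ln k\sim n\ln n$); I expect only bookkeeping there, no conceptual difficulty.
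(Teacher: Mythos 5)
Your proof is correct, and its skeleton---pinch the denominator between positive constants, reduce to a pure tail integral, convert via Proposition \ref{tail}/Corollary \ref{cor}, and get slow variation from $(a-s)\nabla_a(\cdot)(s)\to 0$---is the same as the paper's, but the execution differs in two substantive places. In (ii) the paper never leaves the tails of $v$: it writes $\nabla_r^2\nabla_qv(x)=\bigl(\nabla_r^2v(x)-\nabla_r^2v(q)\bigr)/(x-q)$, notes that $x-q$ is bounded away from $0$ and $\infty$ near $x=r$, and so reduces $\pi_{rq}(r)<\infty$ directly to $\int_0^r\nabla_r^2v(x)\,dx<\infty$, i.e.\ to \eqref{xlx} at $a=r$ by Corollary \ref{cor}; you instead pass to $h=\nabla_r\nabla_qv$, apply Proposition \ref{tail} with $n=1$, and then need the coefficient comparison $\sum_k h_kr^k\ln k\asymp\sum_n v_nr^nn\ln n$. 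That comparison is true (upper bound from $\sum_{i+j+k=n-2}q^ir^{j+k}\le Cnr^n$ using $q<r$; lower bound from the terms with $i=0$ and $k\ge(n-2)/2$), but it is exactly the step you defer as bookkeeping, and the paper's integral-level identity avoids it entirely. In (iii) your argument is in fact stronger than the paper's: the factorization $\nabla_qh(x)/h(x)=(x-q)^{-1}\bigl(1-h(q)/h(x)\bigr)$ together with the removable singularity at $q$ bounds the integrand on all of $[0,r]$ even when $v'(r)=\infty$, whereas the paper's bound $\nabla_r\nabla_q^2v(r)/\nabla_r\nabla_qv(0)$ is finite only when $v'(r)<\infty$. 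One caveat on your treatment of the boundary case $v'(r)=\infty$ in (ii): there $\beta=(1-v'(q))/(v'(r)-1)=0$, so $\pi_{rq}$ as defined in \eqref{1q} is a $0\cdot\infty$ expression rather than honestly $+\infty$; the clean reading (consistent with \eqref{beta} and with the paper's application, where $r=1$ and $v'(r)=m<\infty$) is that (ii) tacitly assumes $v'(r)<\infty$, just as (i) tacitly assumes $v'(q)<1$, which you do note. Your remaining deviations---deducing $r<\infty\Rightarrow\pi_q(q)<\infty$ from finiteness of all moments at $q<r\le R$ rather than the paper's bound $\nabla_q^2v(x)\le\nabla_q^2v(r)=\frac{1-v'(q)}{r-q}$, and proving slow variation by a direct increment estimate rather than the Karamata representation $\mathcal L_q(x)=e^{\int_x^q\epsilon(s)ds/s}$---are equivalent in content and equally valid.
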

\begin{proof}
 Use Corollary \ref{cor} to see that   $\pi_q(q)<\infty$ is equivalent to \eqref{xlx} with $a=q$. Slow variation of $\mathcal L_q(x)$ is seen via the representation
 \[\mathcal L_q(x)=e^{\int_x^q{\epsilon(s)ds\over s}},\quad \epsilon(q-s)={(q-s)\nabla_q^{2}  v(s)\over 1-\nabla_q  v(s)}
 ={\nabla_q  v(s)-v'(q)\over 1-\nabla_q  v(s)},\]
 where $\epsilon(x)\to0$ as $x\to0$.
 If $r<\infty$, then 
 $$\pi_{q}(q)<{\nabla_q^2  v(r)\over 1-\nabla_q  v(0)}={1-v'(q)\over(r-q)(1-\nabla_q  v(0))}<\infty,$$
 finishing the proof of $(i)$.
Turning to part $(ii)$, observe that
since 
\[\int_{r-q/2}^r\nabla_q\nabla_r^2 v(x)dx=\int_{r-q/2}^r{\nabla_r^2 v(x)-\nabla_r^2 v(q)\over x-q}dx= \int_{r-q/2}^r{\nabla_r^2 v(x)\over x-q}dx-{v'(r)-1\over r-q}\ln{2(r-q)\over q},\]
the following two inequalities are equivalent
\[\int_{r-q/2}^r\nabla_q\nabla_r^2 v(x)dx<\infty,\quad \int_{0}^r\nabla_r^2 v(x)dx<\infty. \]
Thus indeed,  by Corollary \ref{cor}, $\pi_{rq}(r)<\infty$ is equivalent to \eqref{xlx}  with $a=r$.
Slow variation of $\mathcal L_{rq}(x)$ follows from the representation
 \[\mathcal L_{rq}(x)=e^{\int_x^r\epsilon_{rq}(s){\beta ds\over s}},\quad \epsilon_{rq}(r-s)={(r-s)\nabla_r^{2}\nabla_q  v(s)\over \nabla_r\nabla_q  v(s)}
 ={\nabla_r\nabla_q  v(r)-\nabla_r\nabla_q  v(s)\over \nabla_r\nabla_q  v(s)},\]
 where $\epsilon_{rq}(x)\to0$ as $x\to0$. Hence $(ii)$ holds.
Finally, $(iii)$ follows from
 $$\pi_{qr}(r)<{\nabla_r\nabla_q^2  v(r)\over \nabla_r\nabla_q  v(0)}<\infty,$$
where by Corollary \ref{abb}
\[
\nabla_r\nabla_q  v(0)=\left\{
\begin{array}{lll}
  1-v_1,& \mbox{for }  v_0=0,&   \\
 v_0/q &  \mbox{for }  v_0>0  &   ,
 \end{array}
\right.\quad
\nabla_r\nabla_q^2  v(r)={v'(r)+v'(q)-2\over(r-q)^2}.
\]

\end{proof}

\section{Probability generating functions of the branching process}\label{Smain}

We turn to the probability generating functions $F_t(s)=Es^{Z_t}$ and start by deriving the integral equation \eqref{ieq}. Afterwards, we prove the main finding of this paper, Theorem \ref{main}, presenting refinements of the  equation \eqref{ieq} in terms of the tail generating functions. For $F(s)=F_t(s)$ we will use notation $\nabla_aF_t(s)=\nabla_aF(s)$ and $F_t'(s)=F'(s)$.

If $T$ and  $\nu$ are the life length and offspring number of the ancestral particle, then the following branching renewal property 
 \[Z_t=1_{\{T>t\}}+1_{\{T\le t\}}\sum_{i=1}^{\nu}Z_{t-T}^{(i)}\]
holds, with  $Z_{t-T}^{(i)}$ standing for the number of descendants from the $i$-th ancestral daughter. By the assumption of exponential life length and independence among daughter particles, the branching property yields
 \[F_t(s)=se^{-\lambda t}+\lambda\int_0^tf(F_{t-u}(s))e^{-\lambda u}du,\]
 or more conveniently,
  \begin{equation*}
F_t(s)e^{\lambda t}=s+\lambda\int_0^tf(F_u(s))e^{\lambda u}du.
\end{equation*}
Taking the derivatives we arrive at the backward Kolmogorov equation for the Markov process $\{Z_t\}$
 \begin{equation}\label{ode}
{\partial F_t(s)\over \partial t}=\lambda\Big[f(F_t(s))-F_t(s)\Big],\qquad F_0(s)=s,
\end{equation}
leading to \eqref{ieq}. 
For $M_t=F'_t(1)$, the ordinary differential equation  \eqref{ode} yields  $M_t'=\lambda(m-1)M_t$ with $M_0=1$. This brings the exponential growth formula \eqref{Mt}. 

\begin{proposition}\label{qF} If $q$ is the smallest non-negative root of $f(x)=x$, then  $P(Z_\infty=0)=q$ and  $F_t(q)=q$ for all $t\ge0$. Moreover, $F_t(s)\to q$ as $t\to\infty$ for $s\in[0,1)$.
\end{proposition}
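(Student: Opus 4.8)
The plan is to establish the three claims---extinction probability, fixed point, and limit behaviour---by exploiting the monotonicity and boundedness of $F_t(s)$ in $t$ together with the Kolmogorov equation \eqref{ode} and the integral equation \eqref{ieq}. First I would verify that $F_t(q)=q$ for all $t$: since $f(q)=q$ by definition of $q$, the constant function $F_t\equiv q$ satisfies both the initial condition $F_0(q)=q$ and the ODE \eqref{ode}, because its right-hand side $\lambda[f(q)-q]$ vanishes. By uniqueness of solutions to \eqref{ode} (a standard consequence of the Lipschitz property of $f$ on $[0,1]$), this constant solution must coincide with $F_t(q)$.

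Next I would analyze the limit $F_t(s)\to q$ for $s\in[0,1)$. The key observation is that for fixed $s$, the map $t\mapsto F_t(s)$ is monotone: using \eqref{ode}, the sign of $\partial_t F_t(s)$ is governed by the sign of $f(x)-x$ at $x=F_t(s)$, and on the interval $[0,q)$ convexity of $f$ gives $f(x)>x$, so $F_t(s)$ is nondecreasing and bounded above by $q$ (it cannot cross the fixed point $q$ by the uniqueness already established). Hence $F_t(s)$ converges to some limit $L(s)\le q$ as $t\to\infty$. Because $\partial_t F_t(s)\to 0$ along the convergent trajectory while the right-hand side of \eqref{ode} tends to $\lambda[f(L)-L]$, the limit $L=L(s)$ must itself be a root of $f(x)=x$; since $L\le q$ and $q$ is the smallest nonnegative root, we conclude $L=q$. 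For $s\in(q,1)$ the same reasoning applies with the monotonicity reversed, $F_t(s)$ decreasing down to $q$.

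Finally I would identify $q$ with the extinction probability $P(Z_\infty=0)$. Setting $s=0$ in the generating function gives $F_t(0)=P(Z_t=0)$, which is the probability that the population is extinct by time $t$. The events $\{Z_t=0\}$ are increasing in $t$ (once extinct, always extinct, since $0$ is absorbing), so $P(Z_t=0)\uparrow P(Z_\infty=0)$ by continuity of probability; combining this with the case $s=0$ of the limit just proved, $F_t(0)\to q$, yields $P(Z_\infty=0)=q$.

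I expect the main obstacle to be the rigorous justification that the limit $L(s)$ is a fixed point of $f$ rather than merely an accumulation point where the derivative happens to vanish. The clean way to handle this is to argue directly from the integral equation \eqref{ieq}: as $t\to\infty$ the left-hand side $\lambda t$ diverges, which forces the integrand $1/(f(x)-x)$ to be non-integrable near the upper endpoint $F_t(s)\to L$, and this is possible only if $f(L)-L=0$, i.e.\ $L$ is a singularity point. This argument simultaneously confirms monotonicity and pins down the limit as a root, so I would route the proof through \eqref{ieq} to keep it short and to avoid a separate dynamical-systems lemma about $\omega$-limits.
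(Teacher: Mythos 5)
Your proof is correct, but it follows a genuinely different route from the paper's. The paper also starts from the monotone limit $F_t(0)\nearrow q_\infty:=P(Z_\infty=0)$ and uses \eqref{ode} to get $F_t(0)<q$, hence $q_\infty\le q$; but the step identifying the limit as a root of $f$ is done probabilistically: by the tower property, $q_\infty=E\big(E(1_{\{Z_\infty=0\}}\,|\,Z_t)\big)=E(q_\infty^{Z_t})=F_t(q_\infty)$ for every $t$, and then \eqref{ieq} forces $f(q_\infty)=q_\infty$ (otherwise the integral from $q_\infty$ to $F_t(q_\infty)=q_\infty$ would equal $0\ne\lambda t$). In particular $F_t(q)=q$ falls out of the same identity, and the paper never separately argues the convergence $F_t(s)\to q$ for general $s\in[0,1)$. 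You instead prove $F_t(q)=q$ first, by ODE uniqueness, and then run a dynamical-systems argument: monotone bounded trajectories of the autonomous equation \eqref{ode} converge, and the limit must be a zero of $f(x)-x$, either because the derivative has to vanish along a convergent trajectory or, more cleanly as you note, because $\lambda t\to\infty$ in \eqref{ieq} forces non-integrability of $1/(f(x)-x)$ at the limit point. What your route buys: it covers the third assertion of the proposition ($F_t(s)\to q$ for all $s\in[0,1)$, including $s\in(q,1)$) explicitly, which the paper's printed proof leaves implicit, and it avoids the conditional-expectation identity. What the paper's route buys: brevity, and the structurally useful observation that the extinction probability is automatically a fixed point of every $F_t$. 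One small imprecision on your side: $f$ need not be Lipschitz on all of $[0,1]$, since $m=\infty$ is allowed in the supercritical case; this is harmless, because when $q<1$ the relevant trajectories stay in $[0,\max(s,q)]\subset[0,1)$ where $f$ is analytic, and when $q=1$ one has $m\le1<\infty$ so that $f$ is Lipschitz on $[0,1]$ --- exactly the local Lipschitz property your uniqueness argument actually needs.
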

\begin{proof}
Let $q_\infty$ stand for the extinction probability $P(Z_\infty=0)$ which is the limit of the monotone function
 \[P(Z_t=0)=F_t(0)\nearrow q_\infty,\quad t\to\infty.\]
We want to show that $q_\infty=q$. From ${\partial F_t(0)\over \partial t}>0$ we see that  $F_t(0)<q$, since $f(F_t(0))>F_t(0)$ in accordance with \eqref{ode}. Thus $q_\infty \le q$.
Moreover, since
$$q_\infty =E(E(Z_\infty=0|Z_t))=E(q_\infty ^{Z_t})=F_t(q_\infty ),\quad t\ge0,$$
equation \eqref{ieq} entails $q_\infty =f(q_\infty )$. 
\end{proof}
\begin{corollary}
Equation \eqref{ieq} can be rewritten as 
 \begin{align}\label{pi}
\pi(s,F_t(s))=\lambda t
\end{align}
in terms of $\pi(s_1,s_2)$ defined by  \eqref{pid}-\eqref{pidu} for $v(s)=f(s)$.
\end{corollary}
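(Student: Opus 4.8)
The plan is to recognize that the right-hand side of the master equation \eqref{ieq} is, up to the case distinction in the definition of $\pi$, literally the function $\pi(s,F_t(s))$ evaluated with $v=f$. The only substantive work is to check that the path of integration from $s$ to $F_t(s)$ stays within one of the two intervals $[0,q)$ or $(q,r\wedge R)$ on which $\pi$ is defined, so that the singularity of the integrand at $x=q$ is never crossed. Everything beyond this invariance statement is a direct transcription of \eqref{pid}--\eqref{pidu}.

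First I would fix $s\in[0,q)$ and argue that $F_t(s)\in[s,q)$ for every $t\ge0$. Since $q$ is the smallest root of $f(x)=x$ and $f$ is convex with $f(0)=p_0\ge0$, we have $f(x)>x$ on $[0,q)$; by the backward equation \eqref{ode} this makes $t\mapsto F_t(s)$ nondecreasing, so $F_t(s)\ge F_0(s)=s$. That the trajectory cannot reach or exceed $q$ follows from Proposition \ref{qF}, which supplies the fixed point $F_t(q)=q$: by uniqueness of solutions to the autonomous equation \eqref{ode} two trajectories cannot cross, whence $F_t(s)<q$ for all $t$. Consequently the integral in \eqref{ieq} runs over $[s,F_t(s)]\subset[0,q)$ and coincides with $\pi(s,F_t(s))$ as given by \eqref{pid}.

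Next I would treat $s\in(q,r\wedge R)$. On this interval $f(x)<x$, so \eqref{ode} makes $t\mapsto F_t(s)$ nonincreasing, whence $F_t(s)\le s$; the same crossing argument based on the fixed point $F_t(q)=q$ shows $F_t(s)>q$. Thus $q<F_t(s)\le s<r\wedge R$, which is precisely the configuration in \eqref{pidu} with $s_1=s$ and $s_2=F_t(s)$, and again the integral in \eqref{ieq} equals $\pi(s,F_t(s))$. Combining the two cases yields \eqref{pi} for every admissible $s\ne q$, the value $s=q$ being excluded since $\pi$ is undefined there while $F_t(q)=q$ renders \eqref{ieq} trivial. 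The one point that requires care is this invariance of the two intervals under the flow $s\mapsto F_t(s)$, which rests entirely on the sign of $f(x)-x$ together with the fixed-point identity from Proposition \ref{qF}.
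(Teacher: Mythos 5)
Your proof is correct and takes essentially the same approach as the paper, which states this as an immediate corollary of Proposition \ref{qF}: the fixed-point identity $F_t(q)=q$ is precisely what guarantees the integration path in \eqref{ieq} stays on one side of the singularity at $q$, so the integral coincides with $\pi(s,F_t(s))$ as defined in \eqref{pid}--\eqref{pidu}. You merely spell out the interval-invariance details (sign of $f(x)-x$, monotonicity in $t$, non-crossing of trajectories) that the paper leaves implicit.
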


\begin{proposition}\label{reg} A supercritical Markov branching process $\{Z_t\}$ with the reproduction law $f(s)$ is regular, that is $P(Z_t<\infty)=1$ for all  $t>0$,  if and only if
  \begin{align}\label{regu}
\int_{1-\epsilon}^1{dx\over x-f(x)}=\infty.
\end{align}
\end{proposition}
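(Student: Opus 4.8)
The plan is to integrate the master equation \eqref{ieq} once and to read the regularity of the process off the behaviour of the resulting antiderivative near the upper singularity $x=1$. In the supercritical case we have $q\in[0,1)$ and $r=1$, and by convexity $f(x)<x$ for $x\in(q,1)$ (equivalently, Corollary \ref{cas} with $v=f$). Hence on any interval $[1-\epsilon,1)$ with $1-\epsilon>q$ the integrand $(x-f(x))^{-1}$ appearing in \eqref{regu} is positive and continuous, with a possible singularity only at $x=1$.

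First I would fix a reference point $s_*\in(q,1)$ and set $G(s)=\int_{s_*}^s\frac{dx}{x-f(x)}$ for $s\in(q,1)$; since the integrand is positive and continuous there, $G$ is continuous and strictly increasing. Near $x=q$ we have $x-f(x)\sim(1-f'(q))(x-q)$ with $f'(q)<1$ (the smaller root has derivative below $1$, cf.\ Corollary \ref{aob}), so $G(s)\to-\infty$ as $s\downarrow q$, while near $x=1$ the limit $c:=G(1^-)=\int_{s_*}^1\frac{dx}{x-f(x)}$ is either finite or $+\infty$. Because $\int_{s_*}^{1-\epsilon}(x-f(x))^{-1}dx$ is finite, $c=+\infty$ holds \emph{if and only if} \eqref{regu} holds. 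Thus $G$ is a continuous strictly increasing bijection from $(q,1)$ onto $(-\infty,c)$, and I extend its inverse continuously by $G^{-1}(c^-)=1$.

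Next I would invoke the rewriting \eqref{pi} of the integral equation. For $s\in(q,1)$, Proposition \ref{qF} together with the sign of \eqref{ode} gives $F_t(s)\in(q,s)$, so both integration endpoints lie in $(q,1)$ and \eqref{ieq} reads $\lambda t=\int_{F_t(s)}^s\frac{dx}{x-f(x)}=G(s)-G(F_t(s))$, equivalently $F_t(s)=G^{-1}(G(s)-\lambda t)$. Since $F_t$ is a power series in $s$ it is nondecreasing, and $F_t(1^-)=P(Z_t<\infty)$ by monotone convergence; hence $L_t:=\lim_{s\uparrow1}F_t(s)$ exists and regularity means $L_t=1$ for every $t>0$. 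Letting $s\uparrow1$ in $F_t(s)=G^{-1}(G(s)-\lambda t)$ and using $G(s)\uparrow c$ yields $L_t=G^{-1}(c-\lambda t)$. If \eqref{regu} holds then $c=+\infty$, so $L_t=G^{-1}(c^-)=1$ for all $t$ and the process is regular; if \eqref{regu} fails then $c<\infty$ and $c-\lambda t<c$ for each $t>0$, so $L_t=G^{-1}(c-\lambda t)<1$ and $P(Z_t<\infty)<1$, i.e.\ the process explodes. This gives the stated equivalence.

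The only delicate point is the passage $s\uparrow1$. Rather than interchanging two limits that may both diverge, I keep the exact identity $F_t(s)=G^{-1}(G(s)-\lambda t)$ valid for \emph{every} $s<1$ and pass to the limit of the single continuous map $G^{-1}$, which is legitimate once $G^{-1}$ has been continuously extended to $c^-$. The step that makes this clean is verifying that $F_t(s)$ never leaves $(q,1)$, so that $G$ is always evaluated away from its singularity at $x=1$; the surjectivity of $G$ onto all of $(-\infty,c)$ is what guarantees $G^{-1}(c-\lambda t)$ is defined for every $t>0$ (and correctly tends to $q$ as $t\to\infty$, consistent with Proposition \ref{qF}).
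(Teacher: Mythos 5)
Your proof is correct and takes essentially the same route as the paper: both restrict the master equation \eqref{ieq} to the interval $(q,1)$, let $s\uparrow 1$, and read the regularity dichotomy off whether $\int^1\frac{dx}{x-f(x)}$ diverges at the singularity $x=1$. Your antiderivative $G$ and its continuously extended inverse are just more explicit bookkeeping for the paper's identity $\int_{F_t(s)}^{F_t(1)}\frac{dx}{x-f(x)}=\int_s^1\frac{dx}{x-f(x)}$, with the added (harmless, and slightly more careful) verification that $G$ maps onto all of $(-\infty,c)$ via the logarithmic divergence at $x=q$.
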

\begin{proof} By \eqref{ieq}, we have for all $t\ge0$, 
\[\int_{F_t(s_1)}^{F_t(s_2)}{dx\over x-f(x)}=\int_{s_1}^{s_2}{dx\over x-f(x)},\quad q<s_1<s_2<1.\]
Letting $s_1=s$ and $s_2\nearrow1$ we get
\[\int_{F_t(s)}^{F_t(1)}{dx\over x-f(x)}=\int_{s}^{1}{dx\over x-f(x)},\quad q<s<1.\]
This reveals an important dichotomy: either
\[\int_{1-\epsilon}^1{dx\over x-f(x)}<\infty,\]
and $F_t(1)\in(0,1)$ satisfies
  \[\int_{F_t(1)}^1{dx\over x-f(x)}=\lambda t,\quad t\ge0,\]
  or \eqref{regu} holds and the branching process is regular, that is $F_t(1)=1$ for all $t\ge0$.
  
 In particular, the Markov branching process is regular provided $m<\infty$. Indeed, by Corollary \ref{cas},
 \[
f(s)-s=(1-s)(1-\nabla_1  f(s)),\quad \nabla_1  f(1)=m,
\]
implying the regularity condition \eqref{regu}.
\end{proof}

Next comes the main result of the paper. We will use notation from the previous section adjusted to the probability generating function $v(s)=f(s)$. In this case $q\le1\le R$, and if $q<1$, then $r=1$.
\begin{theorem}\label{main}
If  $t\ge0$ and  $s\in[0,1)$, then

(i) for $m=1$,
\begin{equation}\label{creq}
 \int_{s}^{F_t(s)}{dx\over (1-x)^2\nabla_1^{2}  f(x)}=\lambda t,
 \end{equation}

(ii) for $m\ne1$, we have $F_t'(q)=\gamma^{t}$, where $\gamma=e^{\lambda(f'(q)-1) }\in(0,1)$, and
 \begin{align}\label{subeq}
\nabla_qF_t(s)=\gamma^{t} \exp\Big\{-\int_s^{F_t(s)}{\nabla_q^{2}  f(x)dx\over 1-\nabla_q  f(x)}\Big\},
\end{align}

(iii) for $1<m<\infty$,  we have   $\beta={1-f'(q)\over m-1}\in(0,1)$ and 
 \begin{align}\label{supeq}
\nabla_qF_t(s)=\gamma^{t} \big[\nabla_1F_t(s)\big]^{\beta}
\exp\Big\{\int_s^{F_t(s)}{\nabla_1\nabla_q^{2}  f(x)-\beta\nabla_1^2\nabla_q  f(x)\over \nabla_1\nabla_q  f(x)}dx\Big\}.
\end{align}

\end{theorem}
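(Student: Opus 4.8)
The plan is to feed the master equation \eqref{ieq}, equivalently $\pi(s,F_t(s))=\lambda t$ from \eqref{pi}, into the three representations of $\pi$ assembled in Section~\ref{Stai}, and then to trade the resulting logarithmic and integral terms for tail generating functions of $F_t$ using the single structural fact $F_t(q)=q$ (Proposition~\ref{qF}). Part (i) is immediate: here $q=1$ and $f'(1)=m=1$, so Corollary~\ref{casa} with $v=f$ gives $f(x)-x=(1-x)^2\nabla_1^2 f(x)$; substituting this denominator into \eqref{ieq} yields \eqref{creq} with no further work.

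For part (ii), which covers every $m\ne1$, I would first record that $f'(q)<1$ in both the subcritical regime ($q=1$, $f'(q)=m<1$) and the supercritical regime ($q<1$, whence $f'(q)<1<f'(r)$ by Corollary~\ref{aob}), so $\gamma\in(0,1)$. Differentiating the backward equation \eqref{ode} in $s$ gives $\partial_t F_t'(s)=\lambda(f'(F_t(s))-1)F_t'(s)$; evaluating at $s=q$ and using $F_t(q)=q$ together with $F_0'(q)=1$ integrates to $F_t'(q)=\gamma^t$. The main computation is then to set $s_1=s$, $s_2=F_t(s)$ in Lemma~\ref{pisub} and insert $\pi(s,F_t(s))=\lambda t$. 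The decisive identity is that $F_t(q)=q$ turns the logarithm into a tail generating function: since $\nabla_q F_t(s)=\frac{F_t(s)-q}{s-q}=\frac{q-F_t(s)}{q-s}$, we get $\ln\frac{q-s}{q-F_t(s)}=-\ln\nabla_q F_t(s)$. Solving Lemma~\ref{pisub} for $\ln\nabla_q F_t(s)$, recognizing $\pi_q(F_t(s))-\pi_q(s)=\int_s^{F_t(s)}\frac{\nabla_q^2 f(x)\,dx}{1-\nabla_q f(x)}$, and exponentiating gives \eqref{subeq}.

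Part (iii) builds directly on (ii). Since $1<m<\infty$, Proposition~\ref{reg} guarantees regularity, so $F_t(1)=1$ and, with $q<1=r$, Corollary~\ref{aob} yields $\beta=\frac{1-f'(q)}{m-1}\in(0,1)$. I would take the exponent $-(\pi_q(F_t(s))-\pi_q(s))$ from (ii) and expand it through the first identity of Lemma~\ref{pisup} (with $r=1$), which splits $\pi_q$ into a term $\beta\ln\frac{1}{r-s}$ plus $\pi_{rq}-\pi_{qr}$. Exactly as in part (ii), regularity gives $\nabla_1 F_t(s)=\frac{1-F_t(s)}{1-s}$, so the split-off logarithm becomes $\beta\ln\nabla_1 F_t(s)$, i.e. the factor $[\nabla_1 F_t(s)]^{\beta}$ after exponentiation, while the increments of $\pi_{rq}$ and $\pi_{qr}$ over $[s,F_t(s)]$ combine into the single integral displayed in \eqref{supeq}.

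The genuinely delicate points are bookkeeping rather than analysis: one must keep $r=1$ fixed throughout (iii), invoke regularity to assert $F_t(1)=1$ (which is precisely why (iii) must exclude $m=\infty$), and track signs so that the two uses of $F_t(q)=q$ and $F_t(1)=1$ convert the logarithms into $-\ln\nabla_q F_t(s)$ and $+\beta\ln\nabla_1 F_t(s)$ with the correct orientation of the integration limits. Integrability on $[s,F_t(s)]$ is automatic, since $F_t(s)<1$ for $s\in[0,1)$ and the denominators $1-\nabla_q f(x)$ and $\nabla_1\nabla_q f(x)$ remain strictly positive on $[0,1)$, the singularities at $q$ and $r$ having already been extracted in Section~\ref{Stai}.
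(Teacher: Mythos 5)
Your proposal is correct and follows essentially the same route as the paper: part (i) from Corollary \ref{casa} (equivalently Lemma \ref{picr}) inserted into \eqref{ieq}, part (ii) from Lemma \ref{pisub} combined with $\pi(s,F_t(s))=\lambda t$ and $F_t(q)=q$, and part (iii) by expanding the resulting $\pi_q$ increments via Lemma \ref{pisup} with $r=1$. The only cosmetic differences are that you derive $F_t'(q)=\gamma^t$ from the Kolmogorov equation \eqref{ode}, where the paper simply sets $s=q$ in its intermediate identity $\nabla_qF_t(s)e^{\pi_q(F_t(s))}=\gamma^te^{\pi_q(s)}$, and that you spell out the regularity point $F_t(1)=1$ (Proposition \ref{reg}) needed in (iii), which the paper leaves implicit.
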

\begin{proof} 
Claim $(i)$ follows from  \eqref{pi} and Lemma \ref{picr}.
For $m\ne1$, combining Lemma \ref{pisub}
and  \eqref{ieq}, brings
 \[
(1-f'(q))\lambda t=\ln{q-s\over q-F_t(s)}  +\pi_q(s)-\pi_q(F_t(s)).
\]
Thus
 \begin{align}\label{supic}
\nabla_q  F_t(s)e^{\pi_q(F_t(s))}=\gamma^{t} e^{\pi_q(s)},
\end{align}
$F_t'(q)=\nabla_q  F_t(q)=\gamma^{t}$, and claim $(ii)$ follows. Similarly, claim $(iii)$ follows from  Lemma \ref{pisup}.

\end{proof}

\section{Decomposition of the branching process with $0<q<1$}\label{Sde}
This section is devoted to a supercritical branching process $\{Z_t\}$ with $0<q<1$. Depending on the two possible fates of the process, survival $Z_\infty>0$ or extinction  $Z_\infty=0$, we will label the ancestral particle either as successful (with probability $1-q$) or unsuccessful (with probability $q$). 
Similarly, each daughter (if any)  of the ancestral particle will have one of two possible fates: the branching process stemming from this daughter either dies our survives forever. Thus we can view the offspring number $\nu=\nu_1+\nu_2$ as the sum of two components, where $\nu_1$ stands for the number of successful daughters and $\nu_2$ stands for the number of unsuccessful daughters. Due to the independence of the evolutions of new particles we have
\begin{align*}
E(s_1^{\nu_1}s_2^{\nu_2}|Z_\infty>0)&={E(s_1^{\nu_1}s_2^{\nu_2};Z_\infty>0)\over P(Z_\infty>0)}={E(s_1^{\nu_1}s_2^{\nu_2})-P(Z_\infty=0)\over 1-q}\\
&={E\prod_{k=1}^\nu (s_11_{\{Z_\infty^{(k)}>0\}}+s_21_{\{Z_\infty^{(k)}=0\}})-q\over 1-q}={f(s_1(1-q)+s_2q)-q\over 1-q},
\end{align*}
so that 
\begin{align*}
E(x^{\nu_1}|Z_\infty>0)&={f(x(1-q)+q)-q\over 1-q}.
\end{align*}
On the other hand,
\[E(s^{\nu_2}|Z_\infty=0)=E(s^\nu|Z_\infty=0)=
 {E(s^\nu;Z_\infty=0)\over q}={E(\prod_{k=1}^\nu s1_{\{Z_\infty^{(k)}=0\}})\over q}={f(sq)\over q}.\]
 As a result we get a picture of the subcritical one-type branching processes as a two-type branching process where type 1 particles give birth to at least one particle of the same type and a random number of type 2 particles, while the type 2 particles produce only particles of the same type in the subcritical regime. All particles, irrespective of the type, have the same exponential distribution of the life length.

We show next, using this decomposition, that in the intermediate case of $q\in(0,1)$ the key equation \eqref{subeq} split over two domains $s\in[0,q]$ and $s\in [q,1]$, can be recovered with help of simple transformations from the equation
\eqref{subeq} with $q=1$ and $q=0$ respectively.

Consider the branching process $\{X_t\}$ formed by the unsuccessful particles having the dual reproduction law $g(s)={f(sq)\over q}$. Clearly, the new branching process is subcritical with the offspring mean $h'(q)\in(0,1)$.  Notice that with $x=s/q$, $s\in[0,q]$, we have
\begin{align*}
\nabla_1g(x)&=\nabla_q f(s),\\
\nabla_1^2g(x)&={\nabla_q f(q)-\nabla_q f(s)\over 1-x}=q\nabla_q^2 f(s),
\end{align*}
Therefore,
\begin{align*}
{\nabla_1^{2}  g(x)\over 1-\nabla_1  g(x)}={q\nabla_q^{2}  f(y)\over 1-\nabla_q  f(y)},\quad x=y/q\in[0,1],
\end{align*}
and applying \eqref{subeq}  to $G_t(s)=Es^{X_t}$  we find
\begin{align*}
\nabla_1G_t(s)= \gamma^{t}\exp\Big\{-\int_s^{G_t(s)}{\nabla_1^{2}  g(x)dx\over 1-\nabla_1  g(x)}\Big\}= \gamma^{t}\exp\Big\{-\int_{qs}^{qG_t(s)}{\nabla_q^{2}  f(y)dy\over 1-\nabla_q  f(y)}\Big\}.
\end{align*}
Comparing this with \eqref{subeq} for $F_t(s)$, we see that 
\begin{equation}\label{GF} 
\nabla_1G_t(s/q)=\nabla_qF_t(s),\qquad G_t(s)={F_t(sq)\over q}, \qquad t\ge0.
\end{equation}
A proper interpretation of \eqref{GF} is that the subcritical branching process $X_t$ is the supercritical branching process $Z_t$ conditioned on extinction:
\[Es^{X_t}={F_t(sq)\over q}={E(\prod_{k=1}^{Z_t} s1_{\{Z_\infty^{(k)}=0\}})\over q}=
 {E(s^{Z_t};Z_\infty=0)\over q}=E(s^{Z_t}|Z_\infty=0),\]
 see \cite{JL} for a more general statement of this kind. In other words, we demonstrated that the $0\le s\le q$ part of \eqref{subeq} with $q\in(0,1)$ is obtained from \eqref{subeq} with $q=1$  by the transformation 
\begin{equation}\label{co1} 
F_t(s)=qG_t(s/q),\quad s\in[0,q].
\end{equation}

Another useful transformation is based on the branching process $Y_t$ formed by the successful particles having the reproduction law
\[h(s)={f(s(1-q)+q)-q\over 1-q}.\]
Observe that the generating function $h(s)$ is well-defined for $s\in[-{q\over 1-q},1]$. 
With $x={s-q\over 1-q}$ and  $s\in[q,1]$, we have
\begin{align*}
\nabla_0h(x)&=\nabla_qf(s),\\
\nabla_0^2h(x)&={\nabla_qf(s)-\nabla_qf(q)\over x}=(1-q)\nabla_q^2f(s),
\end{align*}
so that 
\begin{align*}
{\nabla_0^{2}  h(x)\over 1-\nabla_0  h(x)}={(1-q)\nabla_q^{2}  f(y)\over 1-\nabla_q  f(y)},\quad x={y-q\over1-q}\in[0,1].
\end{align*}
Since $h(0)=0$ and $h'(0)=\gamma$,  after applying \eqref{subeq}  to $H_t(s)=Es^{Y_t}$ we get
\begin{align*}
\nabla_0H_t(s)= \gamma^{t}\exp\Big\{\int_{H_t(s)}^s{\nabla_0^{2}  h(x)dx\over 1-\nabla_0  h(x)}\Big\}= \gamma^{t}\exp\Big\{\int_{H_t({s(1-q)+q})}^{s(1-q)+q}{\nabla_q^{2}  f(y)\over 1-\nabla_q  f(y)dy}\Big\}.
\end{align*}
Comparing this with \eqref{subeq} for $F_t(s)$, we see that
$$\nabla_0H_t(s)=\nabla_qF_t(s(1-q)+q),\qquad H_t(s)={F_t(s(1-q)+q)-q\over 1-q},$$ 
which gives
\begin{equation}\label{co2} 
F_t(s)=q+(1-q) H_t\big({s-q\over 1-q}\big),\quad s\in[q,1].
\end{equation}

One of the conclusions of this section is that in some questions concerning non-critical Markov branching processes it is enough to investigate in detail a subcritical generating function, $G_t(s)$, and a "purely supercritical" generating function $H_t(s)$ with $H_t(0)=0$. Then the intermediate supercritical case can be addressed using the transformations \eqref{co1} and \eqref{co2}.

\begin{lemma}\label{A}
For a subcritical extendable $f(s)$, when there exists $r>1$, such that $f(r)=r$, we have  $F_t(r)=r$ for all $t\ge0$.
\end{lemma}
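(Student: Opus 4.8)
\section*{Proof proposal}

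The plan is to transport the regularity dichotomy from the proof of Proposition \ref{reg} from the singularity at the smaller fixed point to the singularity at $r$. Since the process is subcritical and extendable, the two non-negative roots of $f(x)=x$ are $q=1$ and $r>1$, and Corollary \ref{aob} (with its ``$q$'' taken to be our smaller root $1$ and its ``$r$'' our larger root $r$) gives $f'(1)=m<1<f'(r)$, with $f'(r)$ finite. Geometrically, a convex $f$ meeting the line $y=x$ at $1$ and $r$ lies below it in between, so $f(x)<x$ and hence $x-f(x)>0$ throughout $(1,r)$, vanishing only at the endpoint $x=r$.

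First I would record that the generating function extends past $s=1$ up to $s=r$ and that \eqref{ieq} continues to hold there. For fixed $s\in(1,r)$ the backward equation \eqref{ode} has initial value $F_0(s)=s$ and initial derivative $\lambda(f(s)-s)<0$; arguing exactly as in Proposition \ref{qF}, the solution decreases and is trapped above the equilibrium $1$, so $F_t(s)\in(1,s)$ for every $t>0$. In particular $F_t(s)$ stays finite and inside $(1,r)$, the series $E(s^{Z_t})$ converges there, and rewriting \eqref{ieq} with a positive integrand yields
\[
\lambda t=\int_{F_t(s)}^{s}\frac{dx}{x-f(x)},\qquad 1<s<r.
\]

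Next I would let $s\uparrow r$. As $F_t$ is nondecreasing in $s$, the limit $L_t:=\lim_{s\uparrow r}F_t(s)$ exists with $L_t\le r$. If $L_t<r$, the right-hand side above tends to $\int_{L_t}^{r}\frac{dx}{x-f(x)}$, whose integrand blows up at $r$: since $x-f(x)$ is concave and $1<f'(r)<\infty$, its tangent at $r$ gives $x-f(x)\le(f'(r)-1)(r-x)$ near $r$, so $\int^{r}\frac{dx}{x-f(x)}=\infty$. This contradicts the constant value $\lambda t<\infty$, precisely as the divergence of \eqref{regu} forces regularity in Proposition \ref{reg}; hence $L_t=r$. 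By monotone convergence for the power series $F_t(s)=\sum_k P(Z_t=k)s^k$ with nonnegative coefficients, $\lim_{s\uparrow r}F_t(s)=\sum_k P(Z_t=k)r^k=F_t(r)$, so $F_t(r)=L_t=r$ and in particular $E(r^{Z_t})=r<\infty$.

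The main obstacle is the first step: justifying that \eqref{ode}--\eqref{ieq} remain valid for $s\in(1,r)$ and that $F_t(s)$ is genuinely finite there, i.e.\ that $s^{Z_t}$ has finite expectation up to $s=r$. Beyond the trapping argument, this can be secured conceptually by noting that the generator of $\{Z_t\}$ sends $s^k$ to $\lambda k s^{k-1}(f(s)-s)$, which is $\le0$ on $(1,r)$ and vanishes at $s=r$; thus $s^{Z_t}$ is a supermartingale with $E(s^{Z_t})\le s$ for $1<s<r$, while $r^{Z_t}$ is a local martingale whose expectation is pinned at $r$. Either route closes the finiteness gap, after which the divergent-integral mechanism above delivers the equality $F_t(r)=r$.
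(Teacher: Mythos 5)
Your route is genuinely different from the paper's. The paper proves Lemma \ref{A} by a change of reproduction law: since $f(r)=r$, the function $g(s)=f(rs)/r$ is again a probability generating function, the associated branching process $X_t$ is supercritical, and once one knows $G_t(s)=F_t(rs)/r$ (established the same way as \eqref{GF}) together with regularity of $X_t$, the conclusion $F_t(r)=rG_t(1)=r$ is immediate. You instead work directly at the singularity $r$: extend \eqref{ode}--\eqref{ieq} to $s\in(1,r)$, trap $F_t(s)$ in $(1,s)$, and rule out $\lim_{s\uparrow r}F_t(s)<r$ by divergence of $\int^r dx/(x-f(x))$. Your supermartingale bound $E(s^{Z_t})\le s$ on $(1,r)$ (Dynkin's formula for a stopped process plus Fatou, using non-explosion of the subcritical $Z_t$) is a legitimate and explicit way to close the finiteness issue that the paper's transformation sidesteps entirely.

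However, there is a genuine gap, and it sits exactly where your contradiction is produced: the claim that $f'(r)<\infty$. Corollary \ref{aob} gives only $f'(1)<1<f'(r)$; it does not give finiteness, and finiteness can fail: take $p_k=b\,r^{-k}k^{-3/2}$ for $k\ge2$, with $b>0$ and $p_0$ chosen so that $f(1)=1$ and $f(r)=r$. Then $r-f(x)\asymp\sqrt{r-x}$ as $x\uparrow r$, so $f'(r)=\infty$ and $\int^r dx/(x-f(x))<\infty$. In that case your tangent-line bound is vacuous and the divergence argument collapses; worse, the conclusion of the lemma itself fails there: by the computation in Proposition \ref{reg}, convergence of this integral is exactly the failure of the regularity criterion \eqref{regu} for the dual law $g(s)=f(rs)/r$, so the dual process explodes with positive probability and $F_t(r)=r\,P(X_t<\infty)<r$ for $t>0$. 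So what is missing is a hypothesis ($f'(r)<\infty$, or directly divergence of the integral at $r$), not a further argument; to be fair, the paper's one-word assertion that $X_t$ is ``regular'' hides the same assumption, but it at least reduces the lemma to that single identifiable claim. Finally, your fallback that $r^{Z_t}$ is a ``local martingale whose expectation is pinned at $r$'' is circular: a nonnegative local martingale is in general only a supermartingale, and $E(r^{Z_t})<r$ is precisely what happens in the explosive case, so this cannot be used to repair the step.
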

\begin{proof}
 Consider a branching process $X_t$ with the reproduction law $g(s)=f(rs)/r$. This is a  supercritical regular process with $G_t(s)=Es^{X_t}$ satisfying $G_t(1)=1$  for all $t\ge0$. The statement follows from the equality $G_t(s)=F_t(rs)/r$, which is established in the same way as \eqref{GF}.
\end{proof}
\section{Tail generating functions of the linear-fractional form}\label{Ex}

We illustrate our technique using the linear-fractional reproduction law
\begin{equation}\label{lf} 
f(s)=p_0+(1-p_0){ps\over1-(1-p)s},\quad p\in[0,1],\quad p\in(0,1].
\end{equation}
Notice that in contrast to the discrete time case, see for example  \cite{S}, here the linear-fractional reproduction law does not imply the linear-fractional distribution for $Z_t$. It is easy to check that for any $n\ge1$ and $k\ge0$, the tail generating functions are also linear-fractional
\begin{equation}\label{kn} 
\nabla_{a_1}\ldots\nabla_{a_n}f(s)={p(1-p_0)\over1-p }\prod_{i=1}^n{1-p\over1-(1-p)a_i}\cdot{1\over1-(1-p)s}.
\end{equation}
In particular, 
\begin{align*}
\nabla_1^nf(s)&={(1-p)^{n-1}\over p^{n-1}}\cdot{1-p_0\over1-(1-p)s},\\
p_n&=\nabla_0^nf(0)=(1-p_0)(1-p)^{n-1}p,
\end{align*}
The last equality implies that conditioned on being positive,  the offspring number distribution is shifted geometric with parameter $p$.

Consider separately the three major regimes of reproduction depending of the mean offspring number  $m={1-p_0\over p}$. In the critical case, $p_0=1-p$, we have
$$\int_0^{s}{dx\over (1-x)^2\nabla_1^2f(x)}=\int_0^{s}{(1-(1-p)x)dx\over (1-p)(1-x)^2}
=\ln{1\over1-s}+{ps\over (1-p)(1-s)},$$ 
so that  equation \eqref{creq} takes the form
$$\ln \nabla_1F_t(s)={pF_t(s)\over (1-p)(1-F_t(s))}-{ps\over (1-p)(1-s)}-\lambda t,$$ 
so that by Corollary \ref{casa},
$$\nabla_1F_t(s)={p\over 1-p}\cdot{\nabla_2F_t(s)\over \lambda t+\ln \nabla_1F_t(s)}.$$ 

Turning to the subcritical case, $p+p_0>1$, observe first that we get an extendable subcritical process with
\begin{align*}
f(r) &=r,\quad r={p_0\over1-p}>1,
\end{align*}
and according to Lemma \ref{A}, 
\[\nabla_rF_t(s)={r-F_t(s)\over r-s}.\]
This, together with
\begin{align*}
\pi_1(s) &=\int_0^{s}{\nabla_1^{2}  f(x)dx\over 1-\nabla_1  f(x)}=\int_0^s{(1-p_0)(1-p)dx\over p(p_0-(1-p)x)}
=m\ln {r\over r-s},  
\end{align*}
leads to  the following compact form for \eqref{subeq} with $m<1$
\begin{align}\label{lfsub}
\nabla_1F_t(s)=e^{-\lambda (1-m)t}\big[\nabla_rF_t(s)\big]^{m}.
\end{align}

In the supercritical case, $p+p_0<1$, the extinction probability is $q={p_0\over1-p}<1$. By \eqref{kn},
\begin{align*}
\nabla_1\nabla_q  f(s)&={1-p\over1-(1-p)s},\\
\nabla_1^2\nabla_q  f(s)&={(1-p)^2\over p(1-(1-p)s)},\\
\nabla_1\nabla_q^2  f(s)&={(1-p)^2\over (1-p_0)(1-(1-p)s)}.
\end{align*}
Taking into account
 $$\beta={\nabla_1\nabla_q  f(q)\over \nabla_1\nabla_q  f(1)}={p\over1-p_0}=1/m,$$ 
we get
\[\int_0^s{\nabla_1\nabla_q^{2}  f(x)-\beta\nabla_1^2\nabla_q  f(x)\over \nabla_1\nabla_q  f(x)}dx=0.\]
Thus, equation  \eqref{supeq} in the linear-fractional case becomes very simple: we have $f'(q)=1/m$ and
\[\nabla_qF_t(s)=e^{-\lambda (1-1/m)t} \big[\nabla_1F_t(s)\big]^{1/m}.\]
Notice the obvious duality between this equation and its counterpart \eqref{lfsub} for the subcritical case.


Finally, in the linear-fractional case the decomposition of the supercritical branching process is valid with
\begin{align*}
E(s_1^{\nu_1}s_2^{\nu_2}|Z_\infty>0)&={ps_1\over 1-(1-p-p_0)s_1-p_0s_2}\cdot{1-p_0\over 1-p_0s_2}.
\end{align*}

\section{Conditional limit distribution in the subcritical case}\label{Ssub}
Denote by $Q_t=1-F_t(0)=P(Z_t>0)$ the probability of survival by time $t$. 
\begin{proposition}\label{bcr}
If $m<1$, then as $t\to\infty$
\begin{align*}
e^{t\lambda(1-m)}Q_t&\to c\in[0,\infty),\\
E(s^{Z_t}|Z_t>0)&\to \psi(s),\quad s\in[0,1],
\end{align*}
where the limit probability generating function $\psi(s)$ is determined by
 \begin{align*}
\nabla_1\psi(s)=\exp\Big\{\int_0^s{\nabla_1^{2}  f(x)dx\over 1-\nabla_1  f(x)}\Big\}.
\end{align*}
Each of the following two cases, $c>0$ and $\psi'(1)<\infty$, is equivalent to the $x\log x$ condition 
\begin{equation}\label{plx}
 \sum_{k=2}^\infty p_kk\ln k<\infty.
\end{equation}
When $c=0$, there is a slowly varying monotone function $\mathcal L_1$ such that $\mathcal L_1(x)\to0$ as $x\to0$, and
\begin{align*}
Q_t\mathcal L_1(Q_t)= e^{-t\lambda(1-m)}.
\end{align*}
\end{proposition}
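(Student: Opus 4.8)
The plan is to extract everything from Theorem~\ref{main}(ii) specialised to the subcritical case $q=1$, where the process is regular by Proposition~\ref{reg} (since $m<\infty$), so that $F_t(1)=1$ and $\nabla_1F_t(0)=1-F_t(0)=Q_t$. Setting $s=0$ in \eqref{subeq} and writing $F_t(0)=1-Q_t$ yields the fundamental identity
\begin{equation*}
Q_t\,e^{\pi_1(F_t(0))}=\gamma^t=e^{-\lambda(1-m)t},\qquad \pi_1(s)=\int_0^s{\nabla_1^2f(x)\,dx\over 1-\nabla_1f(x)}.
\end{equation*}
With $\mathcal L_1(x)=e^{\pi_1(1-x)}$ this is exactly $Q_t\mathcal L_1(Q_t)=e^{-\lambda(1-m)t}$, and $\mathcal L_1$ is slowly varying and monotone by Proposition~\ref{pipi}(i); this already disposes of the last displayed assertion. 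First I would record that $F_t(0)\nearrow1$ by Proposition~\ref{qF}, so $Q_t\searrow0$, and that $\pi_1$ is nondecreasing; hence $e^{\lambda(1-m)t}Q_t=e^{-\pi_1(F_t(0))}\downarrow e^{-\pi_1(1)}=:c\in[0,\infty)$. Proposition~\ref{pipi}(i) identifies $\pi_1(1)<\infty$ with the $x\log x$ condition \eqref{plx}, giving $c>0$ iff \eqref{plx} holds.

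For the conditional limit I would write
\begin{equation*}
1-E(s^{Z_t}\mid Z_t>0)={1-F_t(s)\over Q_t}=(1-s){\nabla_1F_t(s)\over \nabla_1F_t(0)}=(1-s)e^{\pi_1(s)}\,e^{-(\pi_1(F_t(s))-\pi_1(F_t(0)))},
\end{equation*}
the last equality being \eqref{supic} with $q=1$ and $\pi_1(0)=0$. Since $F_t(s),F_t(0)\to1$ for $s\in[0,1)$, the whole problem reduces to showing the exponent tends to $0$. The clean device is the representation $\pi_1(1-x)=\int_x^1\epsilon(u)\,{du\over u}$ from the proof of Proposition~\ref{pipi}, where $\epsilon(u)\to0$ as $u\to0$; writing $a=1-F_t(s)\le b=Q_t$ gives
\begin{equation*}
|\pi_1(F_t(s))-\pi_1(F_t(0))|=\Big|\int_a^b\epsilon(u)\,{du\over u}\Big|\le \sup_{0<u\le Q_t}|\epsilon(u)|\cdot\ln{Q_t\over 1-F_t(s)}.
\end{equation*}
Convexity of $F_t$ makes $\nabla_1F_t$ nondecreasing, so $Q_t=\nabla_1F_t(0)\le\nabla_1F_t(s)=(1-F_t(s))/(1-s)$, i.e. $(1-F_t(s))/Q_t\ge1-s$; thus the logarithm is bounded by $-\ln(1-s)$ uniformly in $t$, while $\sup_{0<u\le Q_t}|\epsilon(u)|\to0$. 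Hence the exponent vanishes and $1-E(s^{Z_t}\mid Z_t>0)\to(1-s)e^{\pi_1(s)}$, that is $E(s^{Z_t}\mid Z_t>0)\to\psi(s)$ with $\nabla_1\psi(s)=e^{\pi_1(s)}$ as claimed; slow variation of $\mathcal L_1$ forces $(1-s)e^{\pi_1(s)}\to0$, so $\psi(1)=1$ and $\psi$ is a bona fide probability generating function.

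It remains to treat $\psi'(1)$. Since $\nabla_1\psi(1)=\psi'(1)$ by Corollary~\ref{comb}, letting $s\to1$ in $\nabla_1\psi(s)=e^{\pi_1(s)}$ gives $\psi'(1)=e^{\pi_1(1)}$, which is finite precisely when $\pi_1(1)<\infty$, i.e. again precisely under \eqref{plx} by Proposition~\ref{pipi}(i); this also records $c=1/\psi'(1)$. I anticipate the main obstacle to be the step controlling $\pi_1(F_t(s))-\pi_1(F_t(0))$ in the case $c=0$ (equivalently $\pi_1(1)=\infty$), where both terms diverge and only their difference is meaningful. The convexity lower bound $(1-F_t(s))/Q_t\ge1-s$, which keeps the logarithm bounded, together with the slowly varying representation of $\mathcal L_1$, is what I expect to render this indeterminate difference tractable without splitting into the $c>0$ and $c=0$ cases.
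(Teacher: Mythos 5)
Your proposal is correct and takes essentially the same route as the paper's own proof: the same identity $Q_t e^{\pi_1(F_t(0))}=\gamma^t$ from Theorem~\ref{main}(ii) at $s=0$, the same appeal to Proposition~\ref{pipi}(i) for slow variation and the $x\log x$ equivalences, and the same inequalities $1-s\le (1-F_t(s))/Q_t\le 1$ to kill the correction term $\pi_1(F_t(s))-\pi_1(F_t(0))$ --- you merely unpack, via the $\epsilon$-representation of $\mathcal L_1$, the uniform-convergence property of slowly varying functions that the paper invokes without detail. One caveat, shared with the paper's own proof: the clause ``$\mathcal L_1(x)\to0$ as $x\to0$'' is left unverified, and in fact cannot hold as stated, since $Q_t\mathcal L_1(Q_t)=e^{-t\lambda(1-m)}=M_t$ together with $Q_t=P(Z_t\ge1)\le E Z_t=M_t$ forces $\mathcal L_1(Q_t)\ge1$; with your (and the paper's) choice $\mathcal L_1(x)=e^{\pi_1(1-x)}$ one has $\mathcal L_1(x)\to e^{\pi_1(1)}=\infty$ when $c=0$, so that clause is evidently a typo for $\mathcal L_1(x)\to\infty$ (equivalently, $Q_t=\widetilde{\mathcal L}(Q_t)\,e^{-t\lambda(1-m)}$ with $\widetilde{\mathcal L}(x)=e^{-\pi_1(1-x)}\to c=0$).
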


\begin{proof} According to Theorem \ref{main}  $(ii)$ we have
 \begin{align*}
{1-F_t(s)\over 1-s}=M_t \exp\Big\{-\int_s^{F_t(s)}{\nabla_1^{2}  f(x)dx\over 1-\nabla_1  f(x)}\Big\}.
\end{align*}
Putting here $s=0$ we get
\begin{align*}
Q_t=M_t \exp\Big\{-\int_0^{F_t(0)}{\nabla_1^{2}  f(x)dx\over 1-\nabla_1  f(x)}\Big\},
\end{align*}
and applying Proposition \ref{pipi} $(i)$ we arrive at the stated asymptotic formulae for $Q_t$ with
 $c=e^{-\pi_1(1)}$.
 To establish the stated conditional weak convergence we use
\begin{align*}
E(s^{Z_t}|Z_t>0)={E(s^{Z_t})-P(Z_t=0)\over P(Z_t>0)}=1-{1-F_t(s)\over 1-F_t(0)},
\end{align*}
and the equality
\begin{align*}
{1-F_t(s)\over (1-s)Q_t}=\exp\Big\{\int_0^s{\nabla_1^{2}  f(x)dx\over 1-\nabla_1  f(x)}\Big\}{\mathcal L_1(Q_t)\over \mathcal L_1(1-F_t(s))}.
\end{align*}
We have to verify that ${\mathcal L_1(Q_t)\over \mathcal L_1(1-F_t(s))}\to1$.  But this is true
due to slow variation property of  $\mathcal L_1$ and inequalities
\begin{align*}
1\ge {1-F_t(s)\over Q_t}=1-E(s^{Z_t}|Z_t>0)\ge1-s.
\end{align*}
\end{proof}
\begin{corollary} Consider  the linear-fractional reproduction law \eqref{lf} in the subcritical regime. Proposition \ref{bcr} takes place with $c=({r-1\over r})^{m}$ and 
 \[
\psi(s)=1-(1-s)(1-s/r)^{-m}.
\]
\end{corollary}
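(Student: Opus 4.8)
The plan is to specialize the general formulas of Proposition~\ref{bcr} to the linear-fractional reproduction law \eqref{lf} in the subcritical regime $p+p_0>1$, where we have already recorded the extendability root $r=\tfrac{p_0}{1-p}>1$ and the offspring mean $m=\tfrac{1-p_0}{p}<1$. The two quantities to compute are the constant $c=e^{-\pi_1(1)}$ and the limiting conditional generating function $\psi(s)$, both of which reduce to evaluating the single integral $\int_0^s \tfrac{\nabla_1^2 f(x)\,dx}{1-\nabla_1 f(x)}$ that already appears explicitly in the statement.

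First I would reuse the computation of $\pi_1(s)$ carried out in Section~\ref{Ex}, namely
\[
\pi_1(s)=\int_0^s\frac{\nabla_1^2 f(x)\,dx}{1-\nabla_1 f(x)}=m\ln\frac{r}{r-s},
\]
which follows from the linear-fractional forms $\nabla_1^2 f(x)=\tfrac{(1-p)(1-p_0)}{p(1-(1-p)x)}$ and $1-\nabla_1 f(x)$, after recognizing the denominator as proportional to $p_0-(1-p)x=(1-p)(r-x)$. From this the constant is immediate: since $c=e^{-\pi_1(1)}$, plugging in $s=1$ gives $\pi_1(1)=m\ln\tfrac{r}{r-1}$, hence $c=\bigl(\tfrac{r-1}{r}\bigr)^m$, matching the claim. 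Note this also confirms $c>0$, consistent with the fact (Proposition~\ref{pipi}(i)) that $r<\infty$ forces the $x\log x$ condition \eqref{plx}, which here holds automatically because the geometric tail $p_k\propto(1-p)^{k}$ decays fast enough.

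For $\psi(s)$ I would substitute the same integral into the defining relation $\nabla_1\psi(s)=\exp\{\pi_1(s)\}=\bigl(\tfrac{r}{r-s}\bigr)^m$. Since $\nabla_1\psi(s)=\tfrac{\psi(s)-1}{s-1}=\tfrac{1-\psi(s)}{1-s}$, solving for $\psi(s)$ yields $1-\psi(s)=(1-s)\bigl(\tfrac{r}{r-s}\bigr)^m$. Rewriting $\bigl(\tfrac{r}{r-s}\bigr)^m=(1-s/r)^{-m}$ gives exactly $\psi(s)=1-(1-s)(1-s/r)^{-m}$, as stated. A brief sanity check is worth including: $\psi(0)=0$ as required for a conditional-on-survival law, and $\psi(1)=1$.

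The computation is almost entirely routine once $\pi_1(s)$ is in hand, so there is no genuine obstacle; the only point demanding care is bookkeeping with the constant $r$, in particular verifying the algebraic identity $p_0-(1-p)x=(1-p)(r-x)$ that converts the integrand into a clean $\tfrac{c_1}{r-x}$ form and makes the logarithm integrate to $m\ln\tfrac{r}{r-s}$. I would also double-check that the branch of the exponent is correct (the sign of $m$ versus $-m$) by confirming that $\nabla_1\psi$ is increasing in $s$, as it must be for a genuine tail generating function with nonnegative coefficients.
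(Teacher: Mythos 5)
Your proposal is correct and takes exactly the route the paper intends: you reuse the Section~\ref{Ex} computation $\pi_1(s)=\int_0^s\frac{\nabla_1^2 f(x)\,dx}{1-\nabla_1 f(x)}=m\ln\frac{r}{r-s}$ and substitute it into the two formulas from Proposition~\ref{bcr}, namely $c=e^{-\pi_1(1)}$ and $\nabla_1\psi(s)=e^{\pi_1(s)}$ (with $\psi(1)=1$), which gives $c=\bigl(\tfrac{r-1}{r}\bigr)^m$ and $\psi(s)=1-(1-s)(1-s/r)^{-m}$. The supporting checks (the identity $p_0-(1-p)x=(1-p)(r-x)$, the rewriting $(r/(r-s))^m=(1-s/r)^{-m}$, and the consistency with Proposition~\ref{pipi}(i)) are all accurate, so nothing is missing.
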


\section{Limit theorems in the critical case without higher moments}
\label{Scr}

By \eqref{creq}, the key relation in the critical case is
\[F_t(s)=\pi_{-1}(\pi(s)+\lambda t).\]
Under the classical moment condition allowing for the infinite variance
\begin{equation}\label{alpha}
 f(s)=s+(1-s)^{1+\alpha}\mathcal L(1-x),\quad \alpha\in[0,1],
\end{equation}
where $\mathcal L$ is slowly varying at zero, we can use the properties of regularly varying functions to derive asymptotic results for the critical Markov branching processes. 

If \eqref{alpha} holds with $\alpha>0$, then
\begin{align*}
 &\pi(s)\sim \alpha^{-1}(1-s)^{-\alpha}\mathcal L^{-1}(1-s),\\
&\pi_{-1}(y)= 1-y^{-1/\alpha}\mathcal L^*(y),
\end{align*}
where $\mathcal L^*$ is slowly varying at infinity.
In this case 
$$Q_t=1-\pi_{-1}(\lambda t)\sim(\lambda t)^{-1/\alpha}\mathcal L^*(t).$$
In particular, given the offspring number variance $f''(1)=2b$ is finite, we get $Q_t\sim{1\over b\lambda t}$.
Furthermore,
\[E(e^{-\theta Q_tZ_t}|Z_t>0)=1-{1-F_t(e^{-\theta Q_t})\over Q_t}\to1-(1+\theta^{-\alpha})^{-1/\alpha}, \]
so that in the finite variance case the conditional limit distribution is exponential.

The case $\alpha=0$ is addressed by the next theorem inspired by  its discrete time counterpart from \cite{NW}. 
\begin{theorem}\label{A0}
 If $m=1$ and \eqref{alpha} holds with $\alpha=0$,
then for $x\ge0$,
\[P(V(Z_t)\mathcal  L(Q_t)\le x|Z_t>0)\to 1-e^{-x},\quad t\to\infty,\]
where $V(y)=\pi(1-1/y)$.
\end{theorem}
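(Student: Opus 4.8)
The plan is to convert the critical equation \eqref{creq} into an exact deterministic flow for $1-F_t(s)$, read the survival function of $Z_t$ off that flow, and then invert the (monotone) tail by slow variation. Under \eqref{alpha} with $\alpha=0$, Corollary \ref{casa} (applied with $q=1$ and $f'(1)=m=1$) gives $f(s)-s=(1-s)^2\nabla_1^2f(s)$, so $\nabla_1^2f(s)=\mathcal L(1-s)/(1-s)$, and Lemma \ref{picr} yields $\pi(s)=\int_{1-s}^1\frac{du}{u\mathcal L(u)}$ and $V(y)=\pi(1-1/y)=\int_{1/y}^1\frac{du}{u\mathcal L(u)}$. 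Writing $\Pi(\epsilon):=\pi(1-\epsilon)$, equation \eqref{pi} becomes $\Pi(1-F_t(s))=\Pi(1-s)+\lambda t$; at $s=0$ this reads $\Pi(Q_t)=\lambda t$, where $Q_t=1-F_t(0)\to0$ by Proposition \ref{qF}. Since $V$ is increasing, $\{V(Z_t)\mathcal L(Q_t)\le x\}=\{Z_t\le N_t\}$ with $N_t$ the solution of $V(N_t)\mathcal L(Q_t)=x$, i.e. $1/N_t=\Pi^{-1}(x/\mathcal L(Q_t))$, so the theorem reduces to showing $P(Z_t>N_t)/Q_t\to e^{-x}$ at every continuity point $x>0$.

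The analytic core is the sharp estimate that for any $M_t\to\infty$ with $V(M_t)\mathcal L(Q_t)\to x$ one has $(1-F_t(1-1/M_t))/Q_t\to e^{-x}$. Indeed $1-F_t(1-1/M_t)=\Pi^{-1}(V(M_t)+\Pi(Q_t))$, so the estimate is equivalent to $\Pi(e^{-x}Q_t)-\Pi(Q_t)\to x/\mathcal L(Q_t)$, which is immediate from
\[\Pi(e^{-x}Q_t)-\Pi(Q_t)=\int_{e^{-x}Q_t}^{Q_t}\frac{du}{u\mathcal L(u)}\sim\frac{1}{\mathcal L(Q_t)}\log\frac{Q_t}{e^{-x}Q_t}=\frac{x}{\mathcal L(Q_t)},\]
using that $\mathcal L$ is slowly varying over the fixed-ratio interval $[e^{-x}Q_t,Q_t]$. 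I would also record two facts used repeatedly: $\Pi$ is itself slowly varying at $0$ (a de Haan integral with $\Pi(0^+)=\infty$, since $\Pi(Q_t)=\lambda t\to\infty$), and $\lambda t\,\mathcal L(Q_t)=\Pi(Q_t)\mathcal L(Q_t)\to\infty$ (the de Haan integral dominates its auxiliary function $1/\mathcal L$); the latter gives $x/\mathcal L(Q_t)=o(\Pi(Q_t))$, hence $N_t\to\infty$ and $N_tQ_t\to0$.

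It remains to pass from this generating-function asymptotic to the tail itself. For the upper bound I use the elementary estimate $P(Z_t>N)\le(1-F_t(\sigma))/(1-\sigma^{N+1})$ with $\sigma=1-1/M_t$: choosing $M_t=N_t/\rho_t$ for a slowly growing $\rho_t\to\infty$ with $V(M_t)\sim V(N_t)$ (possible since $\Pi$, hence $V$, is slowly varying) makes $\sigma^{N_t}\approx e^{-\rho_t}\to0$, so $\limsup P(Z_t>N_t)/Q_t\le e^{-x}$. The lower bound is the delicate point. The naive truncation $P(Z_t>N)\ge 1-F_t(\sigma)-(1-\sigma)E[Z_t;Z_t\le N]$ is worthless here, because the conditional mean $1/Q_t$ is dominated by atypically large values and $(1-\sigma)E[Z_t]$ dwarfs $Q_t$. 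Instead I exploit monotonicity of $g(k)=P(Z_t>k)$: splitting $\nabla_1F_t(\sigma)=\sum_k\sigma^kg(k)$ at $k=N$ and bounding $g(k)\le g(0)=Q_t$ for $k<N$ and $g(k)\le g(N)$ for $k\ge N$ gives
\[g(N)\ge\frac{1-F_t(\sigma)-Q_t(1-\sigma^{N})}{\sigma^{N}}.\]
Taking now $\sigma=1-1/M_t$ with $M_t=N_t\rho_t$, $\rho_t\to\infty$ slowly and $V(M_t)\sim V(N_t)$, makes $\sigma^{N_t}\to1$ and $Q_t(1-\sigma^{N_t})\sim Q_tN_t/M_t=o(Q_t)$, whence $\liminf P(Z_t>N_t)/Q_t\ge e^{-x}$.

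The step I expect to be the main obstacle is this lower bound, together with the bookkeeping it forces: the generating-function probe $1-1/M_t$ must be placed on opposite sides of the tail level $N_t$ for the two bounds while still keeping $V(M_t)/V(N_t)\to1$, which rests on the slow variation of $V$ (equivalently of $\Pi$) and on $\lambda t\,\mathcal L(Q_t)\to\infty$, the estimate guaranteeing $N_t\ll1/Q_t$ so that $N_t$ sits well below the conditional mean. The existence of a slowly growing $\rho_t\to\infty$ with $V(N_t\rho_t)\sim V(N_t)\sim V(N_t/\rho_t)$ follows by a diagonal argument from the fixed-ratio relation $V(cN)/V(N)\to1$. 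Once the monotonicity bound is in place the remaining estimates are routine, and the integer rounding in the definition of $N_t$ is harmless at continuity points of the exponential limit.
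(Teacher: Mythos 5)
Your proof is correct; it shares the paper's analytic core but performs the conversion to a distributional limit by a genuinely different, more elementary route. Both arguments extract the same key estimate from the critical flow equation \eqref{pi} (equivalently \eqref{creq}): writing $\Pi(\epsilon)=\pi(1-\epsilon)=\int_\epsilon^1 du/(u\mathcal L(u))$, slow variation of $\mathcal L$ gives $\Pi(e^{-x}Q_t)-\Pi(Q_t)\sim x/\mathcal L(Q_t)$, whence $(1-F_t(\sigma_t))/Q_t\to e^{-x}$ for any probe $\sigma_t=1-1/a_t$ with $a_t\to\infty$ and $V(a_t)\mathcal L(Q_t)\to x$; this is precisely the paper's computation $V(e^{x}/Q_t)-V(1/Q_t)\sim x/\mathcal L(Q_t)$ and its two-sided consequence for $V_{-1}(\pi(s_t)+\lambda t)$. (Your word ``equivalent'' for the step through $\Pi^{-1}$ is a slight abuse, since $\Pi^{-1}$ is rapidly varying and asymptotic equality of arguments does not pass through it; the correct justification is the monotone sandwich at $x\pm\epsilon$, which the displayed fixed-ratio integral gives you at once.) The difference lies in the Tauberian step. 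The paper reads its estimate as convergence of the conditional Laplace transform $E(e^{-Z_t/A_t(x)}\mid Z_t>0)\to 1-e^{-x}$ with $A_t(x)=V_{-1}(x/\mathcal L(Q_t))$, and then passes to distribution functions via the Nagaev--Wachtel sandwich inequality together with rapid variation of $V_{-1}$ (Theorem 2.4.7 of \cite{Bi}), i.e.\ $A_t(x)/A_t(y)\to0$ for $x<y$. You instead reduce the event to $\{Z_t\le N_t\}$ with $V(N_t)\mathcal L(Q_t)=x$ and convert directly by truncating the series $\nabla_1F_t(\sigma)=\sum_k\sigma^kP(Z_t>k)$: the elementary bound $P(Z_t>N)\le(1-F_t(\sigma))/(1-\sigma^{N+1})$ with probe $\sigma=1-\rho_t/N_t$ for the upper bound, and the monotonicity bound $P(Z_t>N)\ge\sigma^{-N}\bigl(1-F_t(\sigma)-Q_t(1-\sigma^N)\bigr)$ with probe $\sigma=1-1/(N_t\rho_t)$ for the lower bound; that both probes feel the same limit $e^{-x}Q_t$ is exactly slow variation of $V$ (uniform convergence theorem plus a diagonal choice of $\rho_t\to\infty$). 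The two conversions exploit the same phenomenon---the $V$-scale is so compressed that a factor $\rho_t$ is invisible, which is what rapid variation of $V_{-1}$ encodes---but yours is self-contained, avoiding the appeals to Lemma 1 of \cite{NW} and to the rapid-variation theorem, at the price of the two-probe bookkeeping; the paper's version is shorter given those citations. Your remaining details (existence of $\rho_t$ with $V(N_t\rho_t)\sim V(N_t)\sim V(N_t/\rho_t)$, the fact $\lambda t\,\mathcal L(Q_t)\to\infty$ from de Haan theory, and the integer rounding of $N_t$) are all sound.
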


\begin{proof}
Under the theorem assumptions, $\mathcal L(1-s)=1-\nabla_1f(s)$ is a monotone slowly varying function such that $\mathcal  L(x)\to0$, as $x\to0$.
Therefore,
\[V(y)=\pi(1-1/y)=\int_{1/y}^1{dx\over x\mathcal L(x)}=\int_1^{y}{dz\over z\mathcal L(z^{-1})}\]
implies that $V$ is a monotone slowly varying function such that $V(y)\to\infty$ as $y\to\infty$. By  Theorem 2.4.7 in \cite{Bi}, the inverse of $V$ is rapidly varying so that
\begin{align*}
 V_{-1}(x)/V_{-1}(cx)\to0,\quad x\to\infty,\quad \mbox{ for any }c>1.
 \end{align*}
Thus, for $A_t(x)=V_{-1}(x/L(Q_t))$ and any fixed $0<x<y$, we have $A_t(x)/A_t(y)\to0$.
Therefore, in view of the following inequalities (cf Lemma 1 in \cite{NW})
\[e^{-A_t(x-\epsilon)/A_t(x)}1_{\{Z_t\le A_t(x-\epsilon)\}}\le e^{-Z_t/A_t(x)}\le 1_{\{Z_t\le A_t(x+\epsilon)\}}+e^{-A_t(x+\epsilon)/A_t(x)},\]
it is enough to prove that
\[E(e^{-Z_t/A_t(x)}|Z_t>0)=1-{1-F_t(e^{- 1/A_t(x)})\over Q_t}\to 1-e^{-x},\quad x>0,\quad t\to\infty,\]
or putting $s_t=e^{- 1/A_t(x)}$, that
\begin{equation}\label{ex}
 V_{-1}(\pi(s_t)+\lambda t)\sim e^{x}/Q_t,\quad t\to\infty.
\end{equation}

Using monotonicity of the involved functions, we obtain
\[\pi(s_t)< \pi(1-1/A_t(x))=V(A_t(x))=x/\mathcal L(Q_t),\]
and even
\[(x-\epsilon)/\mathcal L(Q_t)<\pi(s_t)< x/\mathcal L(Q_t),\]
for sufficiently large $t$. On the other hand, from
\[V(e^x/Q_t)-V(1/Q_t)=\int_{1/Q_t}^{e^x/Q_t}{dz\over z\mathcal L(z^{-1})}\sim x/\mathcal L(Q_t)\]
it follows that
\[x/\mathcal L(Q_t)+\lambda t\le V(e^x/Q_t)\le x/\mathcal L(e^{-x}Q_t)+\lambda t.\]
We see that
\[ V_{-1}(\pi(s_t)+\lambda t)\le V_{-1}(x/\mathcal L(Q_t)+\lambda t)\le e^{x}/Q_t,\]
and for sufficiently large $t$, 
\[ V_{-1}(\pi(s_t)+\lambda t)\ge V_{-1}((x-\epsilon)/\mathcal L(Q_t)+\lambda t)\ge V_{-1}((x-2\epsilon)/\mathcal L(e^{-x}Q_t)+\lambda t)\ge e^{x-2\epsilon}/Q_t.\]
Thus \eqref{ex} holds and Theorem \ref{A0} is proven.
\end{proof}

\section{Two limit theorems in the supercritical case}\label{Ssup}

For a supercritical case with $1<m<\infty$ we prove two asymptotic results, Theorems \ref{lsup} and  \ref{gsup}. 

\begin{theorem}\label{lsup}
 Consider a Markov branching process with $1<m<\infty$. Then for $k\ge1$,
 \begin{align*}
P(Z_t=k)&\sim a_k\gamma^{t},\quad t\to\infty,
\end{align*}
 where, see \eqref{piq}, 
 \[\sum_{k=1}^\infty a_ks^k=qe^{-\pi_q(q)}+(s-q) e^{\pi_q(s)-\pi_q(q)}.\]
 Moreover, if $q\in(0,1)$, then
 \[E(s^{Z_t}|Z_t>0,Z_\infty=0)\to 1-(1-s) e^{\pi_q(s)}.\]
\end{theorem}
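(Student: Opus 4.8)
The plan is to read off the coefficient asymptotics of part one from the functional limit already contained in \eqref{supic}, and then to obtain the conditional limit of part two by a short computation from that same functional limit, interpreting it as a Yaglom statement for the extinction-conditioned process.

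For the first claim I would rewrite \eqref{supic} as
\[
F_t(s)-q=(s-q)\,\gamma^{t}\,e^{\pi_q(s)-\pi_q(F_t(s))},\qquad s\in[0,1).
\]
By Proposition \ref{qF} we have $F_t(s)\to q$, and since $1<m<\infty$ forces $r=1<\infty$, Proposition \ref{pipi}(i) gives $\pi_q(q)<\infty$; as $\pi_q$ is continuous at $q$ this produces the pointwise limit $\gamma^{-t}\big(F_t(s)-q\big)\to\phi(s):=(s-q)\,e^{\pi_q(s)-\pi_q(q)}$ on $[0,1)$. Expanding $\phi(s)-\phi(0)$ in powers of $s$ reproduces $q\,e^{-\pi_q(q)}+(s-q)e^{\pi_q(s)-\pi_q(q)}=\sum_{k\ge1}a_ks^k$, which identifies the coefficients $a_k$.

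The real content of part one is to upgrade this convergence of functions to convergence of coefficients. The functions $\gamma^{-t}\big(F_t(s)-F_t(0)\big)=\sum_{k\ge1}\gamma^{-t}P(Z_t=k)\,s^k$ have non-negative coefficients and, using also $\gamma^{-t}(F_t(0)-q)\to\phi(0)$, converge pointwise on $[0,1)$ to $\phi(s)-\phi(0)=\sum_{k\ge1}a_ks^k$. I would then invoke the continuity theorem for generating functions with non-negative coefficients: convergence at one interior point $s_1\in(0,1)$ gives a uniform bound $\gamma^{-t}P(Z_t=k)\le C\,s_1^{-k}$, and along any subsequence a diagonal extraction followed by dominated convergence with respect to counting measure (dominating the $k$-th term by $C(s/s_1)^k$ for $s<s_1$) forces every subsequential coefficient limit to reconstruct $\phi-\phi(0)$, hence to equal $a_k$ by uniqueness of power-series coefficients. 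This passage from pointwise to coefficientwise convergence is the only genuinely delicate step; the rest is bookkeeping.

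For the second claim, fix $q\in(0,1)$. Weighting a path with $Z_t=k$ by the probability $q^{k}$ that all currently living lines eventually die out gives, as in Section \ref{Sde}, $E\big(s^{Z_t};Z_\infty=0\big)=F_t(qs)$ and $P(Z_t>0,Z_\infty=0)=q-F_t(0)$, so that
\[
E\big(s^{Z_t}\mid Z_t>0,\,Z_\infty=0\big)=\frac{F_t(qs)-F_t(0)}{q-F_t(0)}.
\]
Dividing numerator and denominator by $\gamma^{t}$ and applying the functional limit of part one, the numerator tends to $\phi(qs)-\phi(0)$ and the denominator to $-\phi(0)=q\,e^{-\pi_q(q)}$; cancelling the common factor $q\,e^{-\pi_q(q)}$ leaves $1-(1-s)\,e^{\pi_q(qs)}$. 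Note that part two uses only the pointwise functional limit, not the coefficientwise refinement. Conceptually the computation is transparent: conditioned on $\{Z_\infty=0\}$ the process coincides with the subcritical dual $X_t$ of Section \ref{Sde}, and the limit above is exactly its Yaglom law as supplied by Proposition \ref{bcr}, which also confirms that the limit is a genuine probability generating function.
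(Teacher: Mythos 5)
Your proposal is correct and follows essentially the same route as the paper: both extract from \eqref{supic} the asymptotics $q-F_t(0)\sim\gamma^{t}qe^{-\pi_q(q)}$ and $F_t(s)-q\sim\gamma^{t}(s-q)e^{\pi_q(s)-\pi_q(q)}$ (via Proposition \ref{qF} and the finiteness $\pi_q(q)<\infty$ from Proposition \ref{pipi}(i)), and both obtain the second claim from the identity $E(s^{Z_t}\mid Z_t>0,Z_\infty=0)=\frac{F_t(qs)-F_t(0)}{q-F_t(0)}$. Two remarks. First, your continuity-theorem argument (uniform bound $\gamma^{-t}P(Z_t=k)\le Cs_1^{-k}$ from convergence at one interior point, then subsequence extraction, dominated convergence, and uniqueness of power-series coefficients) makes explicit the passage from pointwise convergence of $\gamma^{-t}\bigl(F_t(s)-F_t(0)\bigr)$ to the coefficientwise statement $P(Z_t=k)\sim a_k\gamma^{t}$; the paper treats this step as immediate, so your version is the more complete one. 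Second, your limit in part two, $1-(1-s)e^{\pi_q(qs)}$, differs from the printed statement $1-(1-s)e^{\pi_q(s)}$; but your form is what the paper's own proof yields (the asymptotics give $\frac{q-F_t(sq)}{q-F_t(0)}\to(1-s)e^{\pi_q(qs)}$), and it agrees both with the Yaglom law of the extinction-conditioned dual process (since $\int_0^s\frac{\nabla_1^2g(x)}{1-\nabla_1g(x)}\,dx=\pi_q(qs)$ for $g(s)=f(qs)/q$) and with the paper's linear-fractional corollary $1-(1-s)(1-sq)^{-1/m}$. So the discrepancy is a typo in the theorem statement, not a flaw in your argument.
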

\begin{proof}  If $p_0=0$, then $F_t(0)=0$. If $p_0>0$, then by  \eqref{supic},
  \begin{align*}
(q-F_t(0))&\sim \gamma^{t}qe^{-\pi_q(q)},\quad t\to\infty,
\end{align*}
and recalling Lemma \ref{qF} we get
  \begin{align*}
F_t(s)-q&\sim \gamma^{t}(s-q)e^{\pi_q(s)-\pi_q(q)},\quad t\to\infty.
\end{align*}
Now, for the first claim, it remains to notice that
\begin{align*}
\sum_{k=1}^\infty P(Z_t=k)s^k=F_t(s)-q+q-F_t(0).
\end{align*}
The second claim follows from
   \begin{align*}
E(s^{Z_t}|Z_t>0,Z_\infty=0)&={E(s^{Z_t};Z_t>0,Z_\infty=0)\over P(Z_t>0,Z_\infty=0)}
={E(s^{Z_t};Z_\infty=0)-P(Z_t=0)\over q-F_t(0)}\\
&={F_t(sq)-F_t(0)\over q-F_t(0)}=1-{q-F_t(sq)\over q-F_t(0)}.
\end{align*}
\end{proof}

%

\begin{theorem}\label{gsup}
 Consider a supercritical case with $1<m<\infty$.  The normalized by its mean branching process converges almost surely 
 $$Z_te^{(1-m)t}\to W,\quad t\to\infty.$$
 If \eqref{xlx} holds, then 
 \begin{align}\label{rol}
 Ee^{-\rho W}=q+(1-q)\phi(\rho),
 \end{align}
where $\phi(\rho)\in(0,1)$, $\rho>0$, satisfies
  \begin{align}\label{ro}
\phi(\rho)=q+(1-q) \Big({1-\phi(\rho)\over\rho}\Big)^{\beta}
\exp\Big\{\int^1_{\phi(\rho)}{\beta\nabla_1^2\nabla_q  f(x)-\nabla_1\nabla_q^{2}  f(x)\over \nabla_1\nabla_q  f(x)}dx\Big\}.
\end{align}
If \eqref{xlx} does not holds, then $P(W=0)=1$.
\end{theorem}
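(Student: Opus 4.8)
The plan is to obtain $W$ by a martingale argument and then to convert the refined equation \eqref{supeq} into the functional equation \eqref{ro} through a Laplace transform computation, with the $x\log x$ condition entering only via Proposition \ref{pipi}. For the almost sure convergence, note that the branching and Markov properties give $E(Z_{t+u}\mid\mathcal F_t)=Z_tM_u$, so by \eqref{Mt} the normalised process $W_t=Z_t/M_t$ is a non-negative right-continuous martingale; the martingale convergence theorem then yields $W_t\to W$ almost surely, which is the stated convergence. Since extinction forces $W=0$ we always have $P(W=0)\ge q$, and the issue left open is whether $W$ is positive on the survival event or degenerates to $0$.

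For the transform, I would set $\phi(\rho)=Ee^{-\rho W}=\lim_{t\to\infty}F_t(e^{-\rho/M_t})$, the limit being justified by bounded convergence, and evaluate \eqref{supeq} along $s=s_t:=e^{-\rho/M_t}$, which increases to $1$ with $1-s_t\sim\rho/M_t$. The left-hand side gives $\nabla_qF_t(s_t)=\frac{F_t(s_t)-q}{s_t-q}\to\frac{\phi(\rho)-q}{1-q}$. On the right I would control three ingredients: the blow-up factor $\nabla_1F_t(s_t)=\frac{1-F_t(s_t)}{1-s_t}\sim M_t\frac{1-\phi(\rho)}{\rho}$; the exact cancellation $\gamma^tM_t^{\beta}=1$, forced by $\beta=\frac{1-f'(q)}{m-1}$, $\gamma=e^{\lambda(f'(q)-1)}$ and \eqref{Mt}; and the integral $\int_{s_t}^{F_t(s_t)}\to\int_1^{\phi(\rho)}$, whose sign reversal turns the integrand of \eqref{supeq} into that of \eqref{ro}. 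Collecting these factors produces \eqref{ro}.

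The convergence of the exponent in \eqref{ro} is precisely the $x\log x$ dichotomy. In the notation of \eqref{1q} with $r=1$ that exponent equals $[\pi_{1q}(1)-\pi_{1q}(\phi(\rho))]-[\pi_{q1}(1)-\pi_{q1}(\phi(\rho))]$, and Proposition \ref{pipi} shows that $\pi_{q1}(1)<\infty$ unconditionally, while $\pi_{1q}(1)<\infty$ if and only if \eqref{xlx} holds with $a=1$, that is, iff \eqref{plx}. When \eqref{plx} holds the exponential factor is finite and \eqref{ro} determines a genuine Laplace transform with $\phi(\rho)\in(0,1)$, so $W$ is non-degenerate. The main obstacle is the opposite implication: when \eqref{plx} fails, $\pi_{1q}(1)=\infty$ makes the exponential factor diverge, and the limiting identity can hold only in the degenerate form $\phi(\rho)\equiv1$, i.e. $W=0$ almost surely. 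Making this $0\cdot\infty$ balance rigorous requires showing directly that $1-F_t(s_t)\to0$; the natural device is the slowly varying function $\mathcal L_{1q}$ of Proposition \ref{pipi}(ii), to be used just as $\mathcal L_1$ was used in the subcritical Proposition \ref{bcr}, absorbing the divergence and forcing the normalised limit to vanish.
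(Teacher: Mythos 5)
Your proposal is correct and follows the paper's proof essentially step for step: the martingale argument giving almost sure convergence and \eqref{cW}, the evaluation of \eqref{supeq} at $s_t=e^{-\rho/M_t}$ using the exact cancellation $\gamma^tM_t^\beta=1$, and Proposition \ref{pipi} with $r=1$ for the $x\log x$ dichotomy (and your reading of \eqref{ro} as the equation satisfied by $Ee^{-\rho W}$ itself is exactly what this computation yields). Your only deviation is the closing hedge: no appeal to $\mathcal L_{1q}$ is needed when \eqref{plx} fails, because $F_t(s_t)$ converges to some limit $\Phi$ by \eqref{cW}, and if $\Phi<1$ then on the right-hand side of \eqref{supeq} the factor $\gamma^t\big[\nabla_1F_t(s_t)\big]^\beta$ tends to $\rho^{-\beta}(1-\Phi)^\beta>0$ while the exponent $\big[\pi_{1q}(s_t)-\pi_{1q}(F_t(s_t))\big]-\big[\pi_{q1}(s_t)-\pi_{q1}(F_t(s_t))\big]$ tends to $+\infty$ (since $\pi_{1q}(1)=\infty$ when \eqref{plx} fails, whereas $\pi_{q1}(1)<\infty$ unconditionally), so the right-hand side diverges although the left-hand side obeys $\nabla_qF_t(s_t)\le1$; this contradiction forces $\Phi=1$ for every $\rho>0$, i.e. $P(W=0)=1$, with no $0\cdot\infty$ balance to resolve.
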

\begin{proof}
Observe that $Z_t/M_t$ forms a non-negative martingale which yields the asserted almost sure convergence and
\begin{equation}\label{cW}
 E(e^{-\rho Z_t/M_t})=F_t(e^{-\rho/M_t})\to Ee^{-\rho W},\quad t\to\infty.
\end{equation}
This martingale property is a corollary of the representation 
\[Z_t=\sum_{k=1}^{Z_u}Z_{u,t}^{(k)},\]
where all $Z_{u,t}^{(k)}$, being mutually independent and independent from the number of summands $Z_u$, have a common distribution
\[Z_{u,t}^{(k)}\stackrel{d}{=}Z_{t-u}.\]

Using \eqref{supeq} with $s=e^{-\rho/M_t}$ we find
\[\gamma^{t}(1-e^{-\rho/M_t})^{-\beta}\to\rho^{-\beta},\quad t\to\infty.\]
By Proposition \ref{pipi}, if \eqref{xlx} holds, then  \eqref{supeq} and \eqref{cW} yield \eqref{rol},
where  $\phi(\rho)$ satisfies \eqref{ro}. On the other hand, if \eqref{xlx} does not hold, then by Proposition  \ref{pipi}, \eqref{supeq}, and \eqref{cW}, we get $F_t(e^{-\rho/M_t})\to1$. 
\end{proof}
\begin{corollary} Consider  the linear-fractional reproduction law \eqref{lf} in the supercritical case. Equation \eqref{ro} takes the form
 \[
\phi(\rho)=q+(1-q) \Big({1-\phi(\rho)\over\rho}\Big)^{1/m}.
\]
If $p_0>0$, then
 \[E(s^{Z_t}|Z_t>0,Z_\infty=0)\to 1-(1-s)(1-sq)^{-1/m}.\]
\end{corollary}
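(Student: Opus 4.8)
The plan is to obtain both assertions as direct specializations of the supercritical results of Section \ref{Ssup}, feeding in the explicit linear-fractional tail generating functions already tabulated in Section \ref{Ex}. No new machinery is needed; the only work is to verify that the general integral factors collapse to elementary expressions.

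For the first assertion I would start from equation \eqref{ro} of Theorem \ref{gsup} and simplify its two nontrivial ingredients, the exponent $\beta$ and the integral in the exponential. Section \ref{Ex} already records that in the supercritical linear-fractional regime $\beta=p/(1-p_0)=1/m$. Using the explicit forms
\[\nabla_1\nabla_q f(x)={1-p\over1-(1-p)x},\quad \nabla_1^2\nabla_q f(x)={(1-p)^2\over p(1-(1-p)x)},\quad \nabla_1\nabla_q^2 f(x)={(1-p)^2\over (1-p_0)(1-(1-p)x)},\]
a direct substitution gives $\nabla_1\nabla_q^2 f(x)=\beta\,\nabla_1^2\nabla_q f(x)$, so the numerator $\beta\nabla_1^2\nabla_q f(x)-\nabla_1\nabla_q^2 f(x)$ of the integrand in \eqref{ro} vanishes identically and the exponential factor equals $1$. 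Substituting $\beta=1/m$ then turns \eqref{ro} into the stated fixed-point equation
\[\phi(\rho)=q+(1-q)\Big({1-\phi(\rho)\over\rho}\Big)^{1/m}.\]

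For the second assertion I would invoke the conditional limit of Theorem \ref{lsup}. Its proof reduces the conditional generating function to $1-{q-F_t(sq)\over q-F_t(0)}$, and the accompanying asymptotics $F_t(u)-q\sim\gamma^t(u-q)e^{\pi_q(u)-\pi_q(q)}$ give the limit $1-(1-s)e^{\pi_q(sq)}$. It therefore remains to evaluate $\pi_q$ for the linear-fractional law. From \eqref{kn}, using $q=p_0/(1-p)$ so that $1-(1-p)q=1-p_0$, one finds $\nabla_q f(x)=p/(1-(1-p)x)$ and $\nabla_q^2 f(x)=p(1-p)/((1-p_0)(1-(1-p)x))$, whence the integrand $\nabla_q^2 f(x)/(1-\nabla_q f(x))$ collapses to the elementary ${p\over(1-p_0)(1-x)}={1\over m(1-x)}$. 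Integrating from $0$ to $sq$ gives $\pi_q(sq)={1\over m}\ln{1\over 1-sq}$, so that $e^{\pi_q(sq)}=(1-sq)^{-1/m}$ and the conditional limit is $1-(1-s)(1-sq)^{-1/m}$, as claimed.

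The computations are routine, so the only point requiring care is the bookkeeping in the second part: one must track that the argument of $\pi_q$ is $sq$ rather than $s$ (this reflects the conditioning on extinction, equivalently the dual subcritical law $g(s)=f(sq)/q$ of Section \ref{Sde}), and must cancel the $\gamma^t$ and $e^{-\pi_q(q)}$ factors correctly when passing to the ratio. I expect no genuine obstacle beyond confirming that the combination in the exponent of \eqref{ro} is \emph{exactly} zero rather than merely asymptotically negligible, which the explicit formulas above settle at once.
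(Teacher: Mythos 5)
Your proposal is correct and follows exactly the route the paper intends (the corollary is stated without proof): the first claim is the specialization of \eqref{ro} using the Section \ref{Ex} computations $\beta=p/(1-p_0)=1/m$ and the identity $\nabla_1\nabla_q^2 f(x)=\beta\nabla_1^2\nabla_q f(x)$, which makes the exponential factor identically $1$; the second claim is Theorem \ref{lsup} with $\pi_q(s)=\tfrac1m\ln\tfrac1{1-s}$ evaluated at $sq$. Your bookkeeping point is well taken: the ratio $1-\tfrac{q-F_t(sq)}{q-F_t(0)}$ in the proof of Theorem \ref{lsup} yields the limit $1-(1-s)e^{\pi_q(sq)}$, i.e.\ the argument must be $sq$ (not $s$ as written in the theorem's statement), and it is precisely this version that reproduces the corollary's formula $1-(1-s)(1-sq)^{-1/m}$.
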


\end{document}